\documentclass{amsart}
\usepackage{graphicx}
\usepackage{amsmath}
\usepackage{amssymb}
\usepackage[all]{xy}

\newcommand{\Hom}{\operatorname{Hom}\nolimits}
\newcommand{\D}{\operatorname{D}\nolimits}
\renewcommand{\Im}{\operatorname{Im}\nolimits}
\renewcommand{\mod}{\operatorname{mod}\nolimits}
\newcommand{\stmod}{\operatorname{\underline{mod}}\nolimits}
\newcommand{\stHom}{\operatorname{\underline{Hom}}\nolimits}
\newcommand{\Ker}{\operatorname{Ker}\nolimits}

\newcommand{\Tor}{\operatorname{Tor}\nolimits}
\newcommand{\Ext}{\operatorname{Ext}\nolimits}
\newcommand{\Tateext}{\operatorname{\widehat{Ext}}\nolimits}

\newcommand{\HH}{\operatorname{HH}\nolimits}

\renewcommand{\H}{\operatorname{H}\nolimits}

\newcommand{\m}{\operatorname{\mathfrak{m}}\nolimits}

\newcommand{\La}{\Lambda}

\newcommand{\op}{\operatorname{op}\nolimits}

\newcommand{\T}{\operatorname{\mathcal{T}}\nolimits}

\newcommand{\e}{\operatorname{e}\nolimits}
\newcommand{\Lae}{\Lambda^{\e}}

\newcommand{\s}{\operatorname{\Sigma}\nolimits}

\newcommand{\TateHH}{\operatorname{\widehat{HH}}\nolimits}

\newtheorem{theorem}{Theorem}[section]

\newtheorem{corollary}[theorem]{Corollary}

\newtheorem{lemma}[theorem]{Lemma}
\newtheorem{proposition}[theorem]{Proposition}
\theoremstyle{definition}
\newtheorem*{definition}{Definition}
\theoremstyle{definition}

\theoremstyle{definition}

\theoremstyle{definition}
\newtheorem*{example}{Example}
\theoremstyle{definition}

\theoremstyle{definition}

\theoremstyle{remark}
\newtheorem*{remark}{Remark}
\theoremstyle{definition}

\theoremstyle{definition}

\begin{document}

\title{The Negative Side of Cohomology for Calabi-Yau Categories}
\author{Petter Andreas Bergh, David A.\ Jorgensen \& Steffen Oppermann}

\address{Petter Andreas Bergh \\ Institutt for matematiske fag \\
  NTNU \\ N-7491 Trondheim \\ Norway}
\email{bergh@math.ntnu.no}

\address{David A.\ Jorgensen \\ Department of mathematics \\ University
of Texas at Arlington \\ Arlington \\ TX 76019 \\ USA}
\email{djorgens@uta.edu}

\address{Steffen\ Oppermann \\ Institutt for matematiske fag \\
  NTNU \\ N-7491 Trondheim \\ Norway}
\email{steffen.oppermann@math.ntnu.no}

\date{\today}

\subjclass[2010]{13H10, 16E40, 16W50}

\keywords{Graded algebras, negative products, Calabi-Yau categories, }

\begin{abstract}
We study $\mathbb Z$-graded cohomology rings defined over Calabi-Yau categories. We show that the cohomology in negative degree is a trivial extension of the cohomology ring in non-negative degree, provided the latter admits a regular sequence of central elements of length two. In particular, the product of elements of negative degrees are zero. As corollaries we apply this to Tate-Hochschild cohomology rings of symmetric algebras, and to Tate cohomology rings over group algebras.  We also prove similar results for Tate cohomology rings over commutative local Gorenstein rings.
\end{abstract}

\maketitle

\section{Introduction}\label{intro}

Let $\T$ be an $n$-Calabi-Yau triangulated category, and $X$ an object in $\T$.  We investigate the structure of the
$\mathbb Z$-graded cohomology ring
\[
\Hom_{\T}^*(X,X)=\oplus_{i\in\mathbb Z}\Hom_{\T}(X,\s^iX).
\]
One of our main results states that when the subalgebra $\oplus_{i\ge 0}\Hom_{\T}(X,\s^iX)$ possesses a regular sequence of length 2 of central elements of positive degree, then $n\le -1$ and most products in negative degree are trivial. More precisely, if $x,y\in \Hom_{\T}^*(X,X)$ with $|x|\le n$ and $|y|<0$, then $xy=0$.  In particular, when $n=-1$, which is a common case in applications, then one can remove the word ``most" from the previous sentence, and in fact, the cohomology in negative degree is a trivial extension of the cohomology ring in non-negative degree.

The main situation to which our results apply is when $\T$ is the stable module category of a finite dimensional
symmetric algebra $\Lambda$ over a field.  Here our results translate to the Tate cohomology ring $\Tateext^*_{\La}(M,M)$ of a finitely generated
(left) $\La$-module $M$ having trivial products in negative degree.  As a special case we recover a result of
Benson and Carlson \cite{BensonCarlson} on the Tate cohomology ring $\Tateext^*_{kG}(k,k)$, for $G$ a finite group whose order is divisible by the characteristic of the ground field $k$. Indeed, their paper served as a major inspiration for the present one.
Another corollary is that the Tate-Hochschild cohomology ring $\TateHH^*( \La )$ of a symmetric algebra $\Lambda$, defined in \cite{BerghJorgensen}, has trivial products in negative degree.
We also give various results on the structure of the cohomology rings when
$\oplus_{i\ge 0}\Hom_{\T}(X,\s^iX)$ is only assumed to admit a single central regular element of positive degree.

In Section \ref{A} we derive properties in the abstract for products in $\mathbb Z$-graded associative algebras which possess a natural duality.  The key point here is that when the nonnegative side of such algebras admit a regular sequence of length 1 or 2 contained in the positive side of the algebra, then products in negative degrees are often zero.
These results lay the foundation for the rest of the paper.

In Section \ref{B} we apply the results of Section \ref{A} to cohomology rings of objects in $n$-Calabi-Yau categories.  This includes the specific results on Tate and Tate-Hochschild cohomology mentioned above.

Finally, in Section \ref{C} we prove similar results for Tate cohomology rings over commutative Noetherian local rings, in particular Gorenstein rings. For example, we show that when $(R,\m)$ is a zero-dimensional local Gorenstein ring, $M$ is a finitely generated $R$-module, and $\oplus_{i\ge 0}\Ext_{R}^i(M,M)$ contains a regular sequence of length two, then in $\Tateext_{R}^*(M,M)$ the products of negative degree are zero (cf. \cite{AvramovVeliche} where the structure of $\Tateext^*_R(R/\m,R/\m)$ is treated in any dimension.)

\section{Products in $\mathbb Z$-graded algebras}\label{A}

Throughout this section we let $A = \bigoplus_{i \in \mathbb{Z}} A^i$ be a $\mathbb Z$-graded ring, that is, a unital ring graded by the integers.  For an integer $n$ we write $A^{\ge n} = \bigoplus_{i \geq n} A^i$ (and similarly for $A^{> n}$, $A^{\leq n}$, \ldots). We will be particularly interested in the non-negative subalgebra $A^{\ge 0}=\oplus_{i\ge 0} A^i$ of $A$.
For any graded $A$-module $M$ and any integer $t$, the shifted graded module $M[t]$ is defined by $M[t]^i = M^{i+t}$.

\begin{definition}
The $\mathbb Z$-graded ring $A$ has \emph{non-degenerate products in degree $n$} if for any integer $i$ and any $a \in A^i \setminus \{ 0 \}$ there are $b, c \in A^{n-i}$ such that
\[ ab \neq 0 \neq ca. \]
\end{definition}

If $A$ is also a $k$-algebra, for $k$ a field (always assumed to be concentrated in degree zero) we let $\D A$ denote the graded dual of $A$, that is the bimodule given by $(\D A)^i = \Hom_k(A^{-i}, k)$.

\begin{definition}
Assume $A$ is a $\mathbb Z$-graded $k$-algebra over a field $k$. Then $A$ is \emph{$n$-shifted selfdual} if
\[ A[n] \cong \D A \]
as left (or equivalently as right) $A$-modules.
\end{definition}

\subsection*{Bilinear forms}

For a graded ring $A$, consider a graded bilinear form
\[ \langle -, - \rangle \colon A \times A \to X, \]
where $X$ is some abelian group, concentrated in degree $n$. (If $A$ additionally is a $k$-algebra, we assume $X$ to be a $k$-module and the form to be $k$-bilinear.)
This means we have a family of bilinear maps
\[ \langle -, - \rangle^i \colon A^i \times A^{n-i} \to X. \]

\begin{definition}
The graded bilinear form $\langle -, - \rangle:A\times A \to X$ is called
\begin{itemize}
\item \emph{non-degenerate} if for $i\in\mathbb Z$ and $a \in A^i \setminus \{0\}$ there are
$b, c \in A^{n-i}$ such that $\langle a, b \rangle \neq 0 \neq \langle c, a \rangle$.
\item \emph{associative} if for any $a \in A^i$, $b \in A^j$, and $c \in A^{n-i-j}$ we have $\langle ab, c \rangle = \langle a, bc \rangle$.
\end{itemize}
\end{definition}

\begin{proposition}
Let $A$ be a $\mathbb Z$-graded ring. Then $A$ has non-degenerate products in degree $n$ if and only if $A$ admits a non-degenerate associative graded bilinear form to an abelian group concentrated in degree $n$.
\end{proposition}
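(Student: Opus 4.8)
The plan is to prove both implications by recognizing the multiplication map of $A$ as the universal example of such a form, and by exploiting the unit of $A$ to transfer non-degeneracy back and forth between an abstract form and the actual product.

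For the ``only if'' direction I would take the target to be $X = A^n$, regarded as concentrated in degree $n$, and define the form to be multiplication itself: $\langle a, b \rangle = ab \in A^n$ for $a \in A^i$ and $b \in A^{n-i}$. Bilinearity (and $k$-bilinearity when $A$ is a $k$-algebra, since $A^n$ is then a $k$-module) is just distributivity; associativity of the form is literally associativity of the ring, $\langle ab, c \rangle = (ab)c = a(bc) = \langle a, bc \rangle$; and non-degeneracy of this form is, upon unwinding the definitions, exactly the statement that $A$ has non-degenerate products in degree $n$. So this direction requires no work beyond observing that the definition of an associative non-degenerate form was engineered to make multiplication an instance of it.

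For the ``if'' direction, suppose $\langle -, - \rangle \colon A \times A \to X$ is a non-degenerate associative form with $X$ in degree $n$, and fix $a \in A^i \setminus \{0\}$. Non-degeneracy yields $b, c \in A^{n-i}$ with $\langle a, b \rangle \neq 0 \neq \langle c, a \rangle$. The key step is to feed the unit $1 \in A^0$ into the associativity relation: writing $a = 1 \cdot a$ gives $\langle a, b \rangle = \langle 1 \cdot a, b \rangle = \langle 1, ab \rangle$, and writing $a = a \cdot 1$ gives $\langle c, a \rangle = \langle c, a \cdot 1 \rangle = \langle ca, 1 \rangle$. Since $\langle 1, - \rangle$ and $\langle -, 1 \rangle$ are additive, a vanishing product $ab = 0$ (respectively $ca = 0$) would force $\langle a, b \rangle = 0$ (respectively $\langle c, a \rangle = 0$), contradicting the choice of $b$ and $c$. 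Hence $ab \neq 0 \neq ca$, which is precisely non-degeneracy of products in degree $n$.

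The main (and essentially only) obstacle is the ``if'' direction: the abstract form $\langle -, - \rangle$ need not be the multiplication map, so one cannot directly read off information about products from it. The device that removes this obstacle is the unit trick above, which uses associativity to rewrite a value of the form as its value against $1$ on the genuine product $ab$ or $ca$; this is exactly what links the abstractly defined non-degeneracy back to the multiplication in $A$.
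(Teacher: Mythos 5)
Your proof is correct and is essentially the paper's argument: the paper phrases it by noting that an associative form $A \times A \to X$ is the same as a graded map $A \otimes_A A \cong A \to X$, i.e.\ the form factors through multiplication, which is precisely what your unit trick $\langle a,b\rangle = \langle 1, ab\rangle$ verifies elementwise. Both directions match the paper's intent, with yours simply spelling out the details the paper compresses into ``the claim now follows.''
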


\begin{proof}
Giving an associative graded bilinear form $A \times A \to X$ is equivalent to giving a graded map $A \otimes_A A \to X$ of abelian groups (where the tensor product $A\otimes_AA$ is the graded tensor product). Note that $A \otimes_A A \cong A$ as graded rings, and thus we obtain a map $A^n \to X$ of abelian groups. The claim now follows.
\end{proof}

\begin{proposition}
Let $A$ be a $\mathbb Z$-graded $k$-algebra. Then the following are equivalent
\begin{enumerate}
\item $A$ is $n$-shifted selfdual, and
\item all graded pieces $A^i$ are finite dimensional, and $A$ admits a non-degenerate associative graded bilinear form $A \times A \to k[-n]$.
\end{enumerate}
\end{proposition}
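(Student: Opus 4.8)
The plan is to set up the standard adjunction that converts a bilinear form into a module homomorphism, and then to read off non-degeneracy and the finiteness clause from properties of that homomorphism. First I would record the identification $\Hom_k(A,k[-n]) \cong \D A[-n]$ of graded $k$-spaces: in either case a degree-$i$ element is precisely a $k$-linear map $A^{n-i} \to k$. Hence giving a degree-$0$ graded map $A \to \Hom_k(A,k[-n])$ is the same as giving a degree-$0$ graded map $A[n] \to \D A$, and by currying (tensor--hom adjunction over $k$) this is in turn the same datum as a graded bilinear form $\langle -,-\rangle \colon A \times A \to k[-n]$, via $a \mapsto \langle -, a\rangle$. I will write $\phi \colon A[n] \to \D A$ for the map $\phi(a) = \langle -, a\rangle$, which sends $A^j$ into $(A^{n-j})^{*}$.

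The first key step is to check that $\phi$ is a homomorphism of left $A$-modules if and only if the form is associative. Writing the left action on $\D A$ as $(r\cdot f)(b) = f(br)$, the associativity instance $\langle br, a\rangle = \langle b, ra\rangle$ translates precisely into $\phi(ra) = r\cdot\phi(a)$; this is a one-line computation and I expect no difficulty here. Thus associative forms correspond bijectively to left-module maps $A[n] \to \D A$.

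Next I would match the remaining adjectives degree by degree. Non-degeneracy gives, for every $j$, that $\phi$ is injective on $A^j$ (if $a\in A^j$ is nonzero there is $c\in A^{n-j}$ with $\phi(a)(c)=\langle c,a\rangle\neq 0$). For the implication (2)$\Rightarrow$(1), finite-dimensionality of the pieces together with the injections $A^j \hookrightarrow (A^{n-j})^{*}$ and $A^{n-j}\hookrightarrow (A^{j})^{*}$ forces $\dim_k A^j = \dim_k A^{n-j}$, so each graded component of $\phi$ is an isomorphism and $\phi$ itself is an isomorphism of graded left modules, i.e.\ $A$ is $n$-shifted selfdual. For (1)$\Rightarrow$(2), an abstract left-module isomorphism $\theta \colon A[n] \to \D A$ is componentwise an isomorphism $A^j \cong (A^{n-j})^{*}$; transporting the evaluation pairing along $\theta$, equivalently setting $\langle b, a\rangle := \theta(a)(b)$, yields a form that is associative (because $\theta$ is a module map, by the computation above) and non-degenerate (because each component of $\theta$ is an isomorphism, so every functional, in particular one not vanishing on a prescribed nonzero vector, lies in the image).

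The one genuinely non-formal point, and the step I expect to be the main obstacle, is extracting finite-dimensionality of the graded pieces in the direction (1)$\Rightarrow$(2), since an abstract graded isomorphism $A[n] \cong \D A$ does not visibly bound dimensions. Here I would combine the two isomorphisms $A^j \cong (A^{n-j})^{*}$ and $A^{n-j} \cong (A^{j})^{*}$ to obtain $A^j \cong (A^{j})^{**}$, and then invoke the classical Erd\H{o}s--Kaplansky fact that an infinite-dimensional $k$-vector space $V$ satisfies $\dim_k V < \dim_k V^{*}$: applied twice this gives $\dim_k A^j < \dim_k (A^j)^{**} = \dim_k A^j$, a contradiction unless $A^j$ is finite-dimensional. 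This supplies the finiteness clause and completes the equivalence.
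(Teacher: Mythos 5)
Your proof is correct and takes essentially the same approach as the paper's: both identify associative forms with graded $A$-module maps $A[n] \to \D A$, match non-degeneracy with componentwise bijectivity (using finite-dimensionality of the pieces in one direction), and in the other direction deduce finite-dimensionality from an abstract isomorphism $A^j \cong (A^j)^{**}$. The only difference is that you spell out the Erd\H{o}s--Kaplansky dimension count at that last step, which the paper's proof asserts without comment.
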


\begin{proof}
(1) $\Rightarrow$ (2): Fix an isomorphism $\Phi \colon A[n] \to \D A$ of graded left $A$-modules. Then $\Phi$ consists of isomorphisms $\Phi^i \colon A^{n+i} \to \Hom_k(A^{-i}, k)$ for all $i$. In particular we obtain isomorphisms
\[ \Hom_k((\Phi^{-n-i})^{-1}, k) \circ \Phi^i \colon A^{n+i} \to \Hom_k(\Hom_k(A^{n+i}, k), k). \]
It follows that $A^{n+i}$ is a finite dimensional $k$-vector space for all $i$.

Now we can define a non-degenerate associative graded bilinear form by
\begin{align*}
A \times A \xrightarrow{1 \times \Phi[-n]} A \times \D A[-n] & \xrightarrow{\text{ev.}} k[-n] \\
(a, \varphi) & \mapsto \varphi(a)
\end{align*}

(2) $\Rightarrow$ (1): Since the graded pieces are now assumed to be finite dimensional, having a non-degenerate bilinear form $A^i \times A^{n-i} \to k$ gives an isomorphism $A^i \to \Hom_k(A^{n-i}, k)$. Together these isomorphisms form an isomorphism $A \to \D (A[n]) = \D A[-n]$ of graded $k$-vector spaces.

The associativity of the bilinear form implies that this isomorphism respects the $A$-module structure.
\end{proof}

\begin{remark}
We note that the two propositions above in particular show that any algebra being $n$-shifted selfdual also has non-degenerate products in degree $n$.
\end{remark}

\subsection*{Regular elements}

Let $A$ be a $\mathbb Z$-graded ring. A homogeneous central element $r\in A$ is called a \emph{regular}
element of $A$ if the implication
\[ ra = 0 \quad \Longrightarrow \quad a = 0 \]
holds for all $a \in A$.

Throughout this subsection, we assume that $r$ is a central element of $A$ of positive degree, and a regular element of $A^{\geq 0}$. We assume additionally that $A$ has non-degenerate products in degree $n$.

We consider the following two subbimodules of $A$:
\begin{align*}
\Tor_r A & = \{a \in A \mid r^i a = 0 \text{ for some $i\ge 0$}\} && \text{the \emph{$r$-torsion part of $A$}} \\
I & = ( A^{\leq n} ) && \text{the ideal of $A$ generated by elements} \\
&&& \text{of degree at most $n$}
\end{align*}

\begin{theorem} \label{thm.depth1}
Let $A$ be a $\mathbb Z$-graded ring which has non-degenerate products in degree $n$. If $r$ is a homogeneous
central element of $A$ of positive degree, and a regular element of $A^{\ge 0}$, then we have the following:
\begin{enumerate}
\item $I \cdot \Tor_r A = 0 = \Tor_r A \cdot I$;
\item either $n < 0$, or $r$ acts regularly on $A$.
\end{enumerate}
\end{theorem}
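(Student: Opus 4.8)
The plan is to play two facts against each other: regularity of $r$ on $A^{\geq 0}$ forces all $r$-torsion into strictly negative degrees, while non-degeneracy of products lets us detect non-vanishing by pairing a given element against a complementary one. The preliminary observation I would establish first is that $\Tor_r A \subseteq A^{<0}$. Since $r$ is homogeneous and central, $\Tor_r A$ is a graded two-sided ideal, so it suffices to treat a homogeneous $a \in (\Tor_r A)^i$ with $i \geq 0$. Writing $r^k a = 0$ and noting that each $r^j a$ lies in $A^{\geq 0}$ (because $\deg r > 0$ and $i \geq 0$), repeated cancellation of $r$ using regularity on $A^{\geq 0}$ yields $a = 0$. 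Hence $\Tor_r A$ is concentrated in negative degrees.

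For part (1), I would first prove the generator-level statement $A^{\leq n} \cdot \Tor_r A = 0$. Take homogeneous $x \in A^j$ with $j \leq n$ and $a \in (\Tor_r A)^{-m}$ with $m > 0$, and suppose $xa \neq 0$. Non-degeneracy in degree $n$ supplies $c \in A^{n-j+m}$ with $xac \neq 0$; but $ac$ lies in degree $n-j \geq 0$, so $ac \in A^{\geq 0}$, while $r^k(ac) = (r^k a)c = 0$ forces $ac = 0$ by regularity on $A^{\geq 0}$, a contradiction. The mirror argument, pairing on the left, gives $\Tor_r A \cdot A^{\leq n} = 0$. To pass from the generating set $A^{\leq n}$ to the full ideal $I = (A^{\leq n})$, I would use that $\Tor_r A$ absorbs multiplication by $A$ on both sides: since $I = A \cdot A^{\leq n} \cdot A$, we get $I \cdot \Tor_r A \subseteq A \cdot A^{\leq n} \cdot \Tor_r A = 0$, and symmetrically $\Tor_r A \cdot I = 0$.

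For part (2), assume $n \geq 0$; I claim $r$ then acts regularly on all of $A$, which is equivalent to $\Tor_r A = 0$ (regularity of $r$ gives regularity of every $r^k$, and conversely). Suppose for contradiction that some nonzero homogeneous $a \in (\Tor_r A)^i$ exists, necessarily with $i < 0$ by the preliminary observation. Non-degeneracy furnishes $b \in A^{n-i}$ with $ab \neq 0$. But $ab$ lies in $\Tor_r A$ (an ideal) and in degree $n \geq 0$, so $ab \in A^{\geq 0} \cap \Tor_r A = 0$, a contradiction. Therefore $\Tor_r A = 0$ and $r$ acts regularly on $A$.

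The main obstacle is conceptual rather than computational: one must recognize that regularity on $A^{\geq 0}$, a hypothesis concerning only non-negative degrees, can be brought to bear on torsion living in negative degrees, and the device that makes this possible is non-degeneracy, which lets us multiply a putative torsion element back up into non-negative degree where regularity applies. A secondary point demanding care is the bootstrap in part (1) from the generating set $A^{\leq n}$ to the ideal $I$, which works precisely because $\Tor_r A$ is a genuine subbimodule and so absorbs the outer factors of $A$.
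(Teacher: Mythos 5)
Your proof is correct and follows essentially the same route as the paper's: both rest on the observation that $\Tor_r A$ is a graded subbimodule with $(\Tor_r A)^{\geq 0}=0$, and both use non-degeneracy to multiply a purported nonzero product into a torsion element of non-negative degree, which must vanish. The only cosmetic difference is in part (2), where the paper simply notes that $n\geq 0$ forces $1\in I$, hence $I=A$, and invokes part (1), while you rerun the pairing argument directly; both are valid instances of the same idea.
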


\begin{proof}
For (1) it is sufficient to show that $A^{\leq n} \cdot \Tor_r A = 0 = \Tor_r A \cdot A^{\leq n}$. We show the first equality, the second one can be shown similarly.

Let $a \in A^i$ for some $i \leq n$, and $b \in (\Tor_r A)^j$. Assume $ab \neq 0$. Then there exists $c \in A^{n-i-j}$ such that $abc \neq 0$. Since $\Tor_r A$ is a subbimodule of $A$ we have
\[ bc \in (\Tor_r A)^{j + (n-i-j)} = (\Tor_r A)^{n-i} \subseteq (\Tor_r A)^{\geq 0} = 0, \]
where the last equality follows from the assumption that $r$ acts regularly on $A^{\geq 0}$. This is clearly a contradiction, so $ab = 0$.

(2): Note that if $n \geq 0$ then $1_A \in I$, so $I = A$. It follows from (1) that $\Tor_r A = 0$, so $r$ acts regularly on $A$.
\end{proof}

We observe that if $A$ is an algebra over a field, which is $n$-shifted selfdual, we obtain the following more explicit description of $I$, and of what it means that $r$ acts regularly on $A$.

\begin{lemma} \label{lem.I_Tor_dual}
Assume $A$ is a $\mathbb Z$-graded algebra over a field $k$, which is $n$-shifted selfdual, and $r$ is a central element of $A$ of positive degree and regular on $A^{\ge 0}$. Then
\[ \D I \cong (A / \Tor_r A)[n] \]
as right and as left $A$-modules.
\end{lemma}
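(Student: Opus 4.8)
The plan is to run an orthogonal-complement argument for the non-degenerate associative bilinear form supplied by $n$-shifted selfduality. By the characterization of selfduality established above, the hypothesis gives a functional $\epsilon\colon A^n\to k$ whose associated form $\langle a,b\rangle=\epsilon(ab)$ is non-degenerate and associative, and the selfduality isomorphism $A[n]\cong\D A$ is realized by $a\mapsto\langle a,-\rangle$ as right $A$-modules (and by $a\mapsto\langle -,a\rangle$ as left $A$-modules). Since $I$ is a two-sided ideal, I would dualize the exact sequence of bimodules $0\to I\to A\to A/I\to 0$ to obtain $0\to\D(A/I)\to\D A\to\D I\to 0$, and transport it across $\D A\cong A[n]$. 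Under this identification $\D(A/I)$ becomes the shifted orthogonal complement $({}^{\perp}I)[n]$, where ${}^{\perp}I=\{a\in A\mid\langle a,x\rangle=0\text{ for all }x\in I\}$, so that $\D I\cong(A/{}^{\perp}I)[n]$. Everything then reduces to the identity ${}^{\perp}I=\Tor_r A$ (and, for the left-module statement, the analogous identity for the right orthogonal complement).

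The inclusion $\Tor_r A\subseteq{}^{\perp}I$ is immediate from Theorem \ref{thm.depth1}(1): for $a\in\Tor_r A$ and $x\in I$ we have $ax=0$, hence $\langle a,x\rangle=\epsilon(ax)=0$; it suffices to test this on homogeneous elements, as both $I$ and $\Tor_r A$ are graded. For the reverse inclusion I would argue by contraposition. Given a homogeneous $a\in A^i$ with $a\notin\Tor_r A$, the element $r^k a$ is nonzero for every $k$, where $d=\deg r>0$. Choosing $k$ with $kd\ge -i$, non-degeneracy applied to the nonzero element $r^k a\in A^{\,i+kd}$ (of non-negative degree) produces $b\in A^{\,n-i-kd}$ with $\langle r^k a,b\rangle\ne 0$. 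Since $n-i-kd\le n$ we have $b\in A^{\le n}\subseteq I$, whence $r^k b\in I$ because $I$ is an ideal; and using that $r$ is central,
\[ \langle a,r^k b\rangle=\epsilon\bigl(a\,r^k b\bigr)=\epsilon\bigl(r^k a\,b\bigr)=\langle r^k a,b\rangle\ne 0. \]
This exhibits an element $r^k b\in I$ not annihilated by $a$, so $a\notin{}^{\perp}I$, giving ${}^{\perp}I=\Tor_r A$. The right-orthogonal version is symmetric, using $I\cdot\Tor_r A=0$ and non-degeneracy on the other side, and yields the left-module isomorphism.

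The step I expect to be the main obstacle is precisely this reverse inclusion ${}^{\perp}I\subseteq\Tor_r A$, since it is where regularity of $r$ on $A^{\ge 0}$ must be converted into a genuine pairing statement: the key maneuver is to push a possibly negative-degree $a$ up into non-negative degree via the nonzero-preserving powers $r^k$, extract a non-degeneracy partner that automatically lands in $A^{\le n}$, and then slide $r^k$ back across the central element. I would also take care to match the two one-sided module structures with the correct selfduality isomorphisms ($a\mapsto\langle a,-\rangle$ as a right-module map, $a\mapsto\langle -,a\rangle$ as a left-module map) and the correspondingly correct (left versus right) orthogonal complements, so that the single equality $\D I\cong(A/\Tor_r A)[n]$ is obtained simultaneously as left and as right $A$-modules.
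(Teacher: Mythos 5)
Your proof is correct, and it arrives at the result by a more explicit route than the paper, though the core maneuver is the same. The paper's proof is a universal-property argument: $I$ is by definition the smallest subbimodule of $A$ containing $A^{\leq n}$, and applying $\D(-)[-n]$ turns this into the statement that $\D I[-n]$ is the smallest factor module of $A$ in which no element of $A^{\geq 0}$ becomes zero; one then checks that this factor is exactly $A/\Tor_r A$ (one direction because $\Tor_r A \cap A^{\geq 0}=0$, the other by multiplying a non-torsion element by a power of $r$ until it lands in non-negative degree, where it cannot die). You instead realize the duality concretely via the functional $\epsilon$ and compute the orthogonal complement, reducing everything to the identity ${}^{\perp}I=\Tor_r A$: your easy inclusion invokes Theorem~\ref{thm.depth1}(1) (which the paper's proof never needs), and your hard inclusion is precisely the same $r^k$-maneuver that powers the paper's injectivity step, just phrased as producing a pairing partner $b\in A^{\le n}\subseteq I$ and sliding $r^k$ across by centrality. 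What your version buys is precision about the one-sided structures, which the paper's proof leaves implicit: the isomorphisms $a\mapsto\langle a,-\rangle$ and $a\mapsto\langle -,a\rangle$ are different maps respecting different module structures, and correspondingly ${}^{\perp}I$ and $I^{\perp}$ are a priori only a right (respectively left) submodule of $A$; they need not be two-sided, nor equal to each other, until one proves both coincide with $\Tor_r A$. Your closing paragraph addresses exactly this point, so the left- and right-module claims of the lemma are both genuinely established.
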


\begin{proof}
By definition $I$ is the smallest subbimodule of $A$ containing $A^{\leq n}$. Applying $\D - [-n]$ we obtain that $\D I[-n]$ is the smallest factor bimodule of $A$, in which no elements of $A^{\geq 0}$ become zero. It is clear that this is a factor bimodule of $A / \Tor_r A$. On the other hand $r$ acts regularly on $A / \Tor_r A$, so any non-zero element $a$ of $A / \Tor_r A$ has a non-zero multiple of the form $r^i a \in A^{\geq 0}$, and thus cannot be mapped to zero.  It follows that $\D I[-n] \cong A / \Tor_r A$.
\end{proof}

\begin{lemma} \label{lem.regular}
Assume $A$ is an algebra over a field $k$, which is $n$-shifted selfdual. If $r$ is a central element of $A$ of positive degree which acts regularly on $A$, then $A$ is periodic.
\end{lemma}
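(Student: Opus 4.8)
The plan is to show that multiplication by $r$ furnishes an isomorphism $A \xrightarrow{\sim} A[d]$, where $d = |r| > 0$; since this makes $r$ invertible and identifies $A^i$ with $A^{i+d}$ for every $i$, it exhibits $A$ as periodic with period $d$. First I would record two consequences of $n$-shifted selfduality. By the proposition characterizing $n$-shifted selfdual algebras, every graded piece $A^i$ is finite dimensional over $k$. Moreover, the isomorphism $A[n] \cong \D A$ gives $\dim_k A^{n+i} = \dim_k \Hom_k(A^{-i}, k) = \dim_k A^{-i}$ for all $i$; that is, the dimension function $f(i) := \dim_k A^i$ enjoys the symmetry $f(i) = f(n-i)$.

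Next, since $r$ is central of degree $d$, left multiplication $\mu_r \colon a \mapsto ra$ is a degree-zero homomorphism of left (and right) $A$-modules $A \to A[d]$, and it is injective on each graded piece precisely because $r$ acts regularly on all of $A$. Hence $f(i) \le f(i+d)$ for every $i \in \mathbb Z$.

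The crux is to upgrade these inequalities to equalities using the duality symmetry. Applying $f(i) = f(n-i)$ to both sides of $f(i) \le f(i+d)$ rewrites it as $f(n-i) \le f(n-i-d)$; as $i$ ranges over $\mathbb Z$ so does $j = n-i$, yielding $f(j+d) \le f(j)$ for all $j$. Combined with $f(j) \le f(j+d)$ this forces $f(i) = f(i+d)$ for all $i$. Thus each component $\mu_r \colon A^i \to A^{i+d}$ is an injective $k$-linear map between finite dimensional spaces of equal dimension, hence an isomorphism, so $\mu_r \colon A \to A[d]$ is an isomorphism and $A$ is periodic.

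The delicate point I anticipate is exactly this interplay: injectivity of $\mu_r$ makes $f$ nondecreasing along each residue class modulo $d$, while the selfduality symmetry reflects the sequence and thereby makes it nonincreasing as well, forcing constancy. It is the finite dimensionality of the graded pieces—guaranteed by $n$-shifted selfduality—that allows the passage from ``injective with equal dimensions'' to ``isomorphism''; without it the argument would fail, as the example of a polynomial ring on $r$ already shows.
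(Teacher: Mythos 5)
Your proof is correct, but it takes a different route from the paper's. The paper's argument is functorial and very short: under the selfduality $A[n] \cong \D A$, the $k$-dual of the multiplication map $\mu_r \colon A \to A[d]$ (where $d = |r|$) is identified with $\mu_r$ itself (implicitly using that $r$ is central and the associated bilinear form is associative), so injectivity of $\mu_r$ (regularity) is \emph{equivalent} to surjectivity of $\mu_r$ (every element being divisible by $r$); bijectivity, and hence the isomorphisms $A^i \to A^{i+d}$, follow at once. You instead use only the numerical shadow of selfduality: finite dimensionality of the graded pieces together with the symmetry $\dim_k A^i = \dim_k A^{n-i}$, and then play the inequalities $f(i) \le f(i+d)$ coming from injectivity against their reflections $f(j+d) \le f(j)$ to force equality of dimensions, so each injection $A^i \to A^{i+d}$ is an isomorphism. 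Your route is more elementary, and it sidesteps the one point the paper leaves implicit (why the dual of $\mu_r$ is again $\mu_r$), at the cost of leaning explicitly on finite dimensionality of the graded pieces, which the paper's characterization of $n$-shifted selfdual algebras does guarantee. The paper's route is shorter and yields the cleaner intermediate statement that, in the presence of selfduality, regularity of $r$ is equivalent to divisibility by $r$; both arguments end with the same conclusion, namely that multiplication by $r$ gives the periodicity isomorphisms. One small slip of phrasing on your side: bijectivity of $\mu_r$ does not make $r$ ``invertible'' as a ring element, only multiplication by $r$ invertible as a map, but nothing in your argument depends on this.
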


\begin{proof}
It follows from duality that $r$ acting regularly is equivalent to every element being divisible by $r$. Thus $r$ acts bijectively on $A$, and thus induces isomorphisms $A^i \to A^{i + |r|}$ for all $i$.
\end{proof}

It follows from Theorem~\ref{thm.depth1} and Lemma~\ref{lem.regular} that the case $n \geq 0$ is very special. In practice it seems the most typical case is $n=-1$, which we therefore record here explicitly.

\begin{theorem}
Assume $A$ is an algebra over a field $k$, which is $(-1)$-shifted selfdual, and $r$ is a central element of
$A$ of positive degree which is regular on $A^{\ge 0}$. Then $A$ has a filtration of subbimodules
\[ 0 \subseteq \Tor_r A \subseteq I \subseteq A, \]
such that
\begin{enumerate}
\item $\Tor_r A$ is concentrated in negative degrees, $A / I$ is concentrated in non-negative degrees, and these two are dual as left and as right $A$-modules.
\item $I / \Tor_r A$ is periodic.
\end{enumerate}
\end{theorem}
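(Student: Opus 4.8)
The plan is to build everything on the selfduality $\D A \cong A[n]$ (here $n=-1$) together with the two lemmas already proved, plus the observation from the Remark that $(-1)$-shifted selfduality yields non-degenerate products in degree $-1$ and finite-dimensional graded pieces. First I would record the filtration itself. Since $r$ is regular on $A^{\ge 0}$, the argument in the proof of Theorem~\ref{thm.depth1} shows $(\Tor_r A)^{\ge 0}=0$, i.e.\ $\Tor_r A$ is concentrated in negative degrees; as $n=-1$ this gives $\Tor_r A \subseteq A^{\le n} \subseteq I$, so $0 \subseteq \Tor_r A \subseteq I \subseteq A$ is indeed a chain of subbimodules. Moreover $I \supseteq A^{\le n}$ forces $A/I$ to be concentrated in non-negative degrees. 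This already yields the degree statements in part (1).

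For the duality in (1), I would dualize the short exact sequence of bimodules $0 \to I \to A \to A/I \to 0$. Since $k$ is a field, $\D=\Hom_k(-,k)$ is exact and contravariant, giving $0 \to \D(A/I) \to \D A \to \D I \to 0$. Now selfduality gives $\D A \cong A[n]$, and Lemma~\ref{lem.I_Tor_dual} gives $\D I \cong (A/\Tor_r A)[n]$; crucially, the construction in that lemma identifies the surjection $\D A \to \D I$ with the shift of the canonical quotient $A \to A/\Tor_r A$, whose kernel is $\Tor_r A$. Hence $\D(A/I) \cong (\Tor_r A)[n]$, which says precisely that $A/I$ and $\Tor_r A$ are dual up to the shift $[n]$; since Lemma~\ref{lem.I_Tor_dual} holds for both module structures, this holds as left and as right $A$-modules.

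For the periodicity in (2), I would show that $r$ acts bijectively on $I/\Tor_r A$, which by the argument of Lemma~\ref{lem.regular} is exactly periodicity. Injectivity is easy: $r$ is regular on $A/\Tor_r A$ (if $r\bar a=0$ then $ra\in\Tor_r A$, so $r^{i+1}a=0$ for some $i$ and $a\in\Tor_r A$), and $I/\Tor_r A$ is an $A$-submodule of $A/\Tor_r A$. For surjectivity I would again use duality: from $\D I \cong (A/\Tor_r A)[n]$, multiplication by $r$ on $\D I$ is injective; but since $r$ is central, multiplication by $r$ on $\D I$ is the $k$-dual of multiplication by $r$ on $I$, and because every graded piece is finite-dimensional, an injective dual map forces each original map $I^j \to I^{j+|r|}$ to be surjective. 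Surjectivity then descends to the quotient $I/\Tor_r A$. Combining, $r$ is bijective on $I/\Tor_r A$, giving the isomorphisms $(I/\Tor_r A)^i \to (I/\Tor_r A)^{i+|r|}$ that constitute periodicity.

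The main obstacle I anticipate is the compatibility in the duality step: one must check that the isomorphisms $\D A \cong A[n]$ and $\D I \cong (A/\Tor_r A)[n]$ can be chosen so that the dual of the inclusion $I \hookrightarrow A$ becomes the canonical projection $A \to A/\Tor_r A$ up to shift, so that identifying the kernel with $(\Tor_r A)[n]$ is legitimate. This is implicit in the proof of Lemma~\ref{lem.I_Tor_dual}, which exhibits $\D I[-n]$ as a specific quotient of $A$ rather than an abstract one, and one should make that naturality explicit. A secondary point worth stating carefully is the ``injective dual implies surjective original'' step, valid precisely because selfduality guarantees each graded piece of $A$, and hence of $I$, is finite-dimensional. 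As an aside, dualizing the entire filtration shows $\D(I/\Tor_r A) \cong (I/\Tor_r A)[n]$, so the middle quotient is itself $n$-shifted selfdual; this gives an alternative, perhaps cleaner, route to surjectivity directly from injectivity on $I/\Tor_r A$.
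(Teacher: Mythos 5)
Your proposal is correct and follows essentially the same route as the paper: the same degree argument for the filtration, the same dualization of $0 \to I \to A \to A/I \to 0$ via Lemma~\ref{lem.I_Tor_dual} (whose needed naturality you rightly make explicit), and a duality argument for bijectivity of $r$ on $I/\Tor_r A$. Your main surjectivity argument (transpose of multiplication by $r$ on $I$) is a minor variant of the paper's, which instead notes that $I/\Tor_r A$ is itself $(-1)$-shifted selfdual and invokes the proof of Lemma~\ref{lem.regular} --- exactly the alternative you mention in your closing aside.
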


\begin{proof}
We first note that $\Tor_r A$ is concentraded in negative degrees by assumption, and $I$ contains all elements of negative degree by definition. Thus we have the filtration of the theorem, and the first two statements of (1).

For the duality statement in (1) note that
\[ \D (A/I) = \Ker [ \D A \twoheadrightarrow \D I ] = (\Tor_r A)[-1] \]
by Lemma~\ref{lem.I_Tor_dual}.

For (2) note that $r$ acts regularly on $I / \Tor_r A$, and that $I / \Tor_r A$ is selfdual. The proof now follows the proof of Lemma~\ref{lem.regular}.
\end{proof}

\subsection*{Depth $\geq 2$}

In this subsection we assume that the non-negative part $A^{\geq 0}$ of a $\mathbb Z$-graded ring $A$ admits a regular sequence $(r, \tilde r)$ of length $2$ of central elements of positive degree. That means that $r$ acts regularly on $A^{\geq 0}$, and $\tilde r$ acts regularly on $A^{\geq 0} / r A^{\geq 0}$.

In this situation we obtain the following:

\begin{lemma} \label{lem.tor_depth_2}
Let $A$ be a $\mathbb Z$-graded ring, such that $A^{\geq 0}$ admits a regular sequence $(r, \tilde r)$
of central elements of $A$ of positive degree. Then
\[ \Tor_r A = A^{< 0}. \]
\end{lemma}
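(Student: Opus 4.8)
The plan is to prove the two inclusions $\Tor_r A \subseteq A^{<0}$ and $A^{<0} \subseteq \Tor_r A$ separately, working with homogeneous elements throughout (legitimate, since $\Tor_r A$ is a graded subbimodule). The inclusion $\Tor_r A \subseteq A^{<0}$ is the easy one and uses only the single element $r$: since $r$ is regular on $A^{\geq 0}$, so is every power $r^i$ (if $r^i a = 0$ with $a \in A^{\geq 0}$, peel off one factor of $r$ at a time, staying inside $A^{\geq 0}$). Hence no nonzero element of $A^{\geq 0}$ is annihilated by a power of $r$, i.e. $(\Tor_r A)^{\geq 0} = 0$, which is precisely $\Tor_r A \subseteq A^{<0}$. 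None of the depth-$2$ hypothesis is needed here.

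The substance is the reverse inclusion, and this is where $\tilde r$ must enter essentially (with $r$ alone one cannot expect all negative elements to be torsion). Write $d = |r|$ and $e = |\tilde r|$. Given homogeneous $a \in A^i$ with $i < 0$, I would first climb into $A^{\geq 0}$ using the smallest possible power of $r$: let $p \geq 1$ be minimal with $i + pd \geq 0$, and set $b := r^p a \in A^{\geq 0}$. By minimality of $p$ the degree of $b$ is strictly less than $d$. The goal becomes to show $b = 0$, and the key claim is that $b$ is already divisible by $r$ inside the non-negative part, i.e. $b \in r A^{\geq 0}$. Once this is established a degree count finishes the job, since $rA^{\geq 0}$ is concentrated in degrees $\geq d$ while $b$ sits in degree $< d$; thus $b = 0$ and $a \in \Tor_r A$.

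To prove the divisibility claim I would bring in the second element. Choosing $m$ large enough that $me \geq d$, a short degree check shows $r^{p-1}\tilde r^m a \in A^{\geq 0}$ (by minimality of $p$ we have $i + (p-1)d \geq -d$, and $me \geq d$ compensates). Then $\tilde r^m b = r^p \tilde r^m a = r\cdot(r^{p-1}\tilde r^m a) \in r A^{\geq 0}$. The main obstacle, and the only place the regular sequence is genuinely used, is the descent from $\tilde r^m b \in rA^{\geq 0}$ back to $b \in rA^{\geq 0}$. For this I would first record the auxiliary fact that, because $\tilde r$ is a non-zero-divisor on $A^{\geq 0}/rA^{\geq 0}$, so is every power $\tilde r^m$; hence $\tilde r^m \overline{b} = 0$ in $A^{\geq 0}/rA^{\geq 0}$ forces $\overline{b} = 0$, that is $b \in rA^{\geq 0}$. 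Combining this with the degree count of the previous paragraph closes the argument, and together with the easy inclusion yields $\Tor_r A = A^{<0}$.
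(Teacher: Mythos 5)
Your proof is correct and follows essentially the same route as the paper's: both hinge on taking the minimal power of $r$ carrying a negative-degree element $a$ into $A^{\geq 0}$, multiplying by a large power of $\tilde r$ to land in $rA^{\geq 0}$, invoking regularity of $\tilde r$ on $A^{\geq 0}/rA^{\geq 0}$, and using the degree count that nonzero elements of $rA^{\geq 0}$ have degree at least $|r|$. The only differences are cosmetic: you argue directly where the paper argues by contradiction, and you spell out the easy inclusion $\Tor_r A \subseteq A^{<0}$ which the paper leaves implicit.
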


\begin{proof}
Assume there is $a \in A^{< 0} \setminus \Tor_r A$. Then for any $i$ we have $r^i a \neq 0$. Choose $i$ minimal such that $r^i a \in A^{\geq 0}$. Clearly $r^i a \not\in r A^{\geq 0}$, but for $j \gg 0$ we have ${\tilde r}^j r^i a = r {\tilde r}^j r^{i-1} a  \in r A^{\geq 0}$. This contradicts the assumption that $\tilde r$ acts regularly on $A^{\geq 0} / r A^{\geq 0}$.
\end{proof}

\begin{remark}
We know that $\Tor_r A$ is an $A$-subbimodule of $A$. Lemma~\ref{lem.tor_depth_2} shows that it is also a quotient $A^{\geq 0}$-bimodule of $A$, and thus
\[
A \cong A^{\geq 0} \oplus A^{<0}
\]
as $A^{\geq 0}$-bimodules.
\end{remark}

We obtain the following result immediately from Theorem~\ref{thm.depth1} and Lemma~\ref{lem.tor_depth_2}.

\begin{theorem} \label{thm.depth2}
Let $A$ be a $\mathbb Z$-graded ring having non-degenerate products in degree $n$, such that $A^{\ge 0}$ admits a regular sequence $(r, \tilde r)$ of central elements of $A$ of positive degree. Then
$n < 0$, and $A^{\leq n} \subseteq A^{< 0}$ are ideals of $A$ annihilating each other.
\end{theorem}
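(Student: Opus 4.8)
The plan is to derive everything by feeding the identification of the $r$-torsion from Lemma~\ref{lem.tor_depth_2} into the product vanishing of Theorem~\ref{thm.depth1}. Note first that since $(r,\tilde r)$ is a regular sequence on $A^{\ge 0}$, in particular $r$ is a homogeneous central element of positive degree acting regularly on $A^{\ge 0}$, so the hypotheses of Theorem~\ref{thm.depth1} are met; and Lemma~\ref{lem.tor_depth_2} supplies the key equality $\Tor_r A = A^{<0}$.

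The first real point is $n < 0$. I would argue by contradiction: suppose $n \ge 0$. Then Theorem~\ref{thm.depth1}(2) forces $r$ to act regularly on all of $A$, i.e.\ $\Tor_r A = 0$, and combining this with Lemma~\ref{lem.tor_depth_2} gives $A^{<0} = \Tor_r A = 0$, so $A = A^{\ge 0}$. Now non-degeneracy of products in degree $n$ bounds $A$ from above: for $i > n$ and $a \in A^i$, any partner realizing non-degeneracy would have to lie in $A^{n-i} = 0$, so $a = 0$ and hence $A^i = 0$ for all $i > n$. But regularity of $r$ makes $1, r, r^2, \dots$ all nonzero (if $r^m \ne 0$ then $r\cdot r^m \ne 0$), placing nonzero elements $r^m \in A^{m|r|}$ in arbitrarily high degrees. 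This contradiction gives $n < 0$, and with it the inclusion $A^{\le n} \subseteq A^{<0}$.

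The annihilation is then essentially formal: since $A^{\le n} \subseteq I$ by the definition of $I$, and $A^{<0} = \Tor_r A$, Theorem~\ref{thm.depth1}(1) gives at once $A^{\le n}\cdot A^{<0} \subseteq I\cdot \Tor_r A = 0$ and $A^{<0}\cdot A^{\le n} \subseteq \Tor_r A \cdot I = 0$. That $A^{<0} = \Tor_r A$ is a two-sided ideal is clear, as it is a subbimodule of $A$. The one step I expect to require genuine care, rather than pure formalism, is checking that $A^{\le n}$ is itself an ideal — indeed in a Laurent ring $A^{\le n}$ fails to be an ideal, so any valid argument must genuinely use the length-two hypothesis (through $\Tor_r A = A^{<0}$) together with non-degeneracy, not just the grading. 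For homogeneous $x \in A^j$ and $y \in A^i$ with $i \le n$, I would show $xy \in A^{\le n}$ as follows: assuming $xy \ne 0$, non-degeneracy supplies $c \in A^{n-i-j}$ with $xyc \ne 0$; if $i + j > n$ then $c$ has negative degree, so $yc \in A^{\le n}\cdot A^{<0} = 0$ and hence $xyc = 0$, a contradiction. Thus $i + j \le n$, so $xy \in A^{\le n}$, and the symmetric computation using $A^{<0}\cdot A^{\le n} = 0$ handles multiplication on the other side, establishing that $A^{\le n}$ is a two-sided ideal.
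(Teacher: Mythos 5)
Your proof is correct and follows essentially the same route as the paper's: both hinge on Theorem~\ref{thm.depth1} combined with Lemma~\ref{lem.tor_depth_2}, obtain $n<0$ from the observation that otherwise $r$ would act regularly on all of $A$, forcing $A^{>n}=0$ against the nonvanishing powers of $r$, and then use non-degeneracy together with the annihilation $A^{\le n}\cdot A^{<0}=0=A^{<0}\cdot A^{\le n}$ to rule out products landing in degrees above $n$. The only difference is packaging: the paper phrases the last step as showing that the ideal $I=(A^{\le n})$ equals $A^{\le n}$, while you verify closure of $A^{\le n}$ under left and right multiplication directly, which is the same argument.
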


\begin{proof}
By Lemma~\ref{lem.tor_depth_2} we know that $A^{< 0} = \Tor_r A$ is an ideal of $A$. It follows from Theorem~\ref{thm.depth1}(2) that $n < 0$. (Note that $\Tor_r A = A^{< 0} \ne 0$, since otherwise the non-degenerate products assumption would imply $A^{> n} = 0$, clearly contradicting the existence of a regular element.)

As before we denote by $I$ the ideal generated by $A^{\leq n}$. By Theorem~\ref{thm.depth1}(1) we know that $I$ and $A^{< 0}$ annihilate each other.

It remains to show that $I = A^{\leq n}$. Assume $a \in I^i \setminus \{0\}$ for some $i > n$. By assumption there is $b \in A^{n-i} \subseteq A^{< 0}$ such that $ab \neq 0$, and this contradicts the fact that $I \cdot A^{< 0} = 0$.
\end{proof}

If we restrict to the special case that $A$ is $(-1)$-shifted selfdual we otain the following more explicite description of $A$.

\begin{corollary}\label{cor.depth2}
Let $A$ be a $\mathbb Z$-graded ring having non-degenerate products in degree $-1$. If $A^{\ge 0}$ admits a regular sequence $(r, \tilde r)$ of central elements of $A$ of positive degree, then
\[
A \cong A^{\geq 0} \ltimes A^{< 0}.
\]
If $A$ is moreover a $k$-algebra which is $(-1)$-shifted selfdual, then
$\D A^{< 0} \cong A^{\geq 0}[-1]$ both as left and as right $A^{\geq 0}$-modules.
\end{corollary}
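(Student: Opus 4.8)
The plan is to assemble the corollary from the structural results already established, specializing everything to $n = -1$. For the first statement, I would begin by invoking Theorem~\ref{thm.depth2}. Since $A$ has non-degenerate products in degree $-1$ and $A^{\geq 0}$ carries the regular sequence $(r,\tilde r)$ of central elements of positive degree, the theorem applies with $n = -1$; and because $A^{\leq -1} = A^{<0}$, it tells us precisely that $A^{<0}$ is a two-sided ideal of $A$ satisfying $A^{<0} \cdot A^{<0} = 0$. Combined with the grading decomposition $A = A^{\geq 0} \oplus A^{<0}$ of abelian groups—in which $A^{\geq 0}$ is a subalgebra—this is exactly the data of a trivial extension.

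Second, I would verify that the ring structure of $A$ agrees with that of $A^{\geq 0} \ltimes A^{<0}$, where $A^{<0}$ is regarded as an $A^{\geq 0}$-bimodule by restricting the ambient $A$-action (which is well defined precisely because $A^{<0}$ is an ideal). Writing a general element as $a + m$ with $a \in A^{\geq 0}$ and $m \in A^{<0}$, the product $(a+m)(a'+m') = aa' + (am' + ma') + mm'$ has vanishing last term by the square-zero property, so it reproduces the trivial-extension multiplication $(a,m)(a',m') = (aa', am'+ma')$. This identification is routine, but it is the step where one must be careful that the bimodule appearing in the trivial extension is literally $A^{<0}$ with its restricted action.

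For the selfdual statement, I would note first that $(-1)$-shifted selfduality guarantees non-degenerate products in degree $-1$ (by the Remark following the two propositions), so all hypotheses of the earlier lemmas are met. The key inputs are the two identifications $\Tor_r A = A^{<0}$ from Lemma~\ref{lem.tor_depth_2}, and $I = A^{\leq -1} = A^{<0}$ from the proof of Theorem~\ref{thm.depth2}. Feeding these into Lemma~\ref{lem.I_Tor_dual} with $n = -1$ yields
\[ \D A^{<0} = \D I \cong (A / \Tor_r A)[-1] = (A / A^{<0})[-1] \]
as left and as right $A$-modules. Finally I would identify $A / A^{<0}$ with $A^{\geq 0}$: since $A^{<0}$ is an ideal complemented by the subalgebra $A^{\geq 0}$, the composite $A^{\geq 0} \hookrightarrow A \twoheadrightarrow A/A^{<0}$ is an isomorphism of rings, hence of bimodules, giving $\D A^{<0} \cong A^{\geq 0}[-1]$ as $A$-modules and therefore, by restriction, as $A^{\geq 0}$-modules.

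The main obstacle, to the extent there is one, is bookkeeping rather than substance: matching conventions so that the bimodule defining the trivial extension and the bimodule being dualized are both literally $A^{<0}$ with the restricted $A^{\geq 0}$-action, and confirming that the $A$-module isomorphism supplied by Lemma~\ref{lem.I_Tor_dual} descends to the claimed $A^{\geq 0}$-module isomorphism under restriction of scalars along $A^{\geq 0} \hookrightarrow A$.
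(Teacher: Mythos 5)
Your proposal is correct and takes essentially the same route as the paper's proof: Theorem~\ref{thm.depth2} with $n=-1$ gives that $A^{<0}$ is a square-zero ideal (hence the trivial extension $A \cong A^{\geq 0} \ltimes A^{<0}$), and the duality statement follows from Lemma~\ref{lem.I_Tor_dual} combined with $\Tor_r A = A^{<0}$ (Lemma~\ref{lem.tor_depth_2}) and $I = A^{<0}$ (from the proof of Theorem~\ref{thm.depth2}). The extra bookkeeping you spell out---checking the trivial-extension multiplication and identifying $A/A^{<0}$ with $A^{\geq 0}$---is exactly what the paper's terse proof leaves implicit.
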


\begin{proof}
The first statement holds since, by Theorem~\ref{thm.depth2} $A^{< 0}$ is an ideal which squares to zero. The second claim is a restatement of Lemma~\ref{lem.I_Tor_dual}, using $\Tor_r A = A^{< 0}$ (Lemma~\ref{lem.tor_depth_2}) and $I = A^{< 0}$ (proof of Theorem~\ref{thm.depth2}).
\end{proof}

\section{Calabi-Yau Categories}\label{B}

In this section we apply the general results from Section \ref{A} to endomorphism rings in Calabi-Yau triangulated categories. We fix a field $k$, and start with a general lemma, which guarantees that a certain family of bilinear forms is associative and non-degenerate. Recall first that a category $\T$ is a \emph{$k$-category} if for all objects $X,Y,Z \in \T$, the set $\Hom_{\T}(X,Y)$ is a $k$-vector space, and composition
$$
\Hom_{\T}(Y,Z) \times \Hom_{\T}(X,Y) \to \Hom_{\T}(X,Z)
$$
is $k$-bilinear.

\begin{lemma}\label{forms}
Let $k$ be a field, $\mathcal{T}$ a $k$-category, and $X$ and $Z$ objects in $\mathcal{T}$ such that there is a natural isomorphism
\[
\eta \colon \Hom_{\mathcal{T}}(X,-) \to  \D\Hom_{\mathcal{T}}(-, Z).
\]
Then the bilinear forms
\begin{align*}
\langle - , - \rangle_Y \colon \Hom_{\mathcal{T}}(Y, Z) &\otimes_k \Hom_{\mathcal{T}}(X, Y)  \to k \\
f &\otimes g  \mapsto \eta_Y(g)(f)
\end{align*}
are non-degenerate. Moreover, they are associative in the sense that $\langle f, g \circ h \rangle_Y = \langle f \circ g, h \rangle_{Y'}$ for all objects $Y' \in \T$ and all morphisms $Y \xrightarrow{f} Z$, $Y' \xrightarrow{g} Y$ and $X \xrightarrow{h} Y'$ in $\T$.
\end{lemma}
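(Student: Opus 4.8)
The plan is to prove the two claimed properties of the family of bilinear forms $\langle -,-\rangle_Y$ separately, starting from the given natural isomorphism $\eta\colon \Hom_{\mathcal{T}}(X,-)\to \D\Hom_{\mathcal{T}}(-,Z)$. The central tool throughout will be the naturality of $\eta$, which asserts that for any morphism $\alpha\colon Y\to Y'$ the square relating $\eta_Y$ and $\eta_{Y'}$ commutes. Unpacking this: for $g\in\Hom_{\mathcal{T}}(X,Y)$ and $\alpha\colon Y\to Y'$, applying $\eta$ to the map $\Hom_{\mathcal{T}}(X,\alpha)$ on the source and $\D\Hom_{\mathcal{T}}(\alpha,Z)$ on the target yields the identity $\eta_{Y'}(\alpha\circ g) = \eta_Y(g)\circ \Hom_{\mathcal{T}}(\alpha, Z)$ as elements of $\D\Hom_{\mathcal{T}}(Y',Z)=\Hom_k(\Hom_{\mathcal{T}}(Y',Z),k)$. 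Evaluating the latter on a morphism $f'\in\Hom_{\mathcal{T}}(Y',Z)$ gives the key scalar identity
\[
\eta_{Y'}(\alpha\circ g)(f') = \eta_Y(g)(f'\circ\alpha).
\]

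First I would establish non-degeneracy of each $\langle-,-\rangle_Y$. The form is defined by $\langle f,g\rangle_Y=\eta_Y(g)(f)$, so non-degeneracy in the two variables amounts to two statements. In the second variable, fix $g\in\Hom_{\mathcal{T}}(X,Y)\setminus\{0\}$; since $\eta_Y$ is an isomorphism onto $\D\Hom_{\mathcal{T}}(Y,Z)=\Hom_k(\Hom_{\mathcal{T}}(Y,Z),k)$, the image $\eta_Y(g)$ is a nonzero functional, hence there is some $f$ with $\eta_Y(g)(f)=\langle f,g\rangle_Y\neq 0$. In the first variable, fix $f\in\Hom_{\mathcal{T}}(Y,Z)\setminus\{0\}$; because $\eta_Y$ is an isomorphism of vector spaces, the functionals $\eta_Y(g)$ as $g$ ranges over $\Hom_{\mathcal{T}}(X,Y)$ exhaust all of $\Hom_k(\Hom_{\mathcal{T}}(Y,Z),k)$, and since $f\neq 0$ there is a functional not vanishing on it, so some $g$ gives $\langle f,g\rangle_Y\neq 0$. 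This is routine linear algebra once one observes $\eta_Y$ is bijective and the target is the full linear dual.

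Next I would verify associativity. With $f\colon Y\to Z$, $g\colon Y'\to Y$, $h\colon X\to Y'$, the left side is $\langle f,g\circ h\rangle_Y=\eta_Y(g\circ h)(f)$ and the right side is $\langle f\circ g,h\rangle_{Y'}=\eta_{Y'}(h)(f\circ g)$. I would apply the naturality identity above with the morphism $\alpha:=g\colon Y'\to Y$ playing the role of the map between objects, the element $h\in\Hom_{\mathcal{T}}(X,Y')$ in place of $g$, and $f\in\Hom_{\mathcal{T}}(Y,Z)$ in place of $f'$: this yields $\eta_Y(g\circ h)(f)=\eta_{Y'}(h)(f\circ g)$, which is exactly the desired equality $\langle f,g\circ h\rangle_Y=\langle f\circ g,h\rangle_{Y'}$. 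The whole point is thus a single careful application of naturality with the correct substitutions.

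The main obstacle, such as it is, is purely bookkeeping: one must identify $\D\Hom_{\mathcal{T}}(Y,Z)$ precisely with $\Hom_k(\Hom_{\mathcal{T}}(Y,Z),k)$ and track which variance of $\Hom$ the natural transformation acts on, so that the induced map $\D\Hom_{\mathcal{T}}(\alpha,Z)$ is precomposition by $\alpha$ on the inner $\Hom$, giving the shape $f'\mapsto f'\circ\alpha$ that makes the two forms match. There is no genuine difficulty once the naturality square is written out explicitly and evaluated on an element; the result then follows directly, so I would keep the exposition of the square and its evaluation front and center.
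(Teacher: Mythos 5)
Your proposal is correct and follows essentially the same route as the paper: both proofs reduce everything to the naturality square for $\eta$, evaluate it on elements to get the identity $\eta_Y(g\circ h)(f)=\eta_{Y'}(h)(f\circ g)$, and observe that non-degeneracy is immediate from $\eta_Y$ being an isomorphism onto the full linear dual. The only difference is cosmetic: the paper dismisses non-degeneracy as holding ``by definition,'' while you spell out the two-variable linear-algebra argument explicitly.
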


\begin{proof}
The bilinear forms are non-degenerate by definition, so we only have to check associativity. The isomorphism $\eta$ being natural means that for any morphism $Y' \xrightarrow{g} Y$ there is an equality
\[ \D \Hom_{\mathcal{T}}(g, Z) \circ \eta_{Y} = \eta_{Y'} \circ \Hom_{\mathcal{T}}(X, g). \]
Applying this to a morphism $X \xrightarrow{h} Y'$ we obtain
\[ \eta_{Y'}(h) ( - \circ g) = \eta_{Y}(g \circ h)(-), \]
and inserting a morphism $Y \xrightarrow{f} Z$ we obtain the formula of the lemma.
\end{proof}

Having established this elementary lemma, we turn now to Calabi-Yau triangulated categories. Let $\T$ be a $\Hom$-finite (i.e.\ $\Hom_{\T}(X,Y)$ is a finite dimensional $k$-vector space for all objects $X,Y$) triangulated $k$-category with suspension functor $\s \colon \T \to \T$. A \emph{Serre functor} on $\T$ is an equivalence $S \colon \T \to \T$ of triangulated $k$-categories, together with functorial isomorphisms $\Hom_{\T}(X,Y) \cong \D \Hom_{\T}(Y,SX)$ of vector spaces for all objects $X,Y \in \T$. By \cite{BondalKapranov}, such a functor is unique if it exists. For an integer $n \in \mathbb{Z}$, the category $\T$ is called \emph{$n$-Calabi-Yau} if it admits a Serre functor which is isomorphic as a triangle functor to $\s^n$ (cf.\ \cite{Keller}). In this case there are functorial isomorphisms $\Hom_{\T}(X,Y) \cong \D \Hom_{\T}(Y,\s^nX)$ of vector spaces for all objects $X,Y \in \T$.

We show next that Lemma \ref{forms} implies these functorial isomorphisms induce a non-degenerate associative graded bilinear form on the graded endomorphism ring of an object in $\T$.  We first recall briefly some facts about such
graded endomorphism rings. For an object $X\in\T$, the graded endomorphism ring of $X$ is the graded vector space
\[
A=\oplus_{i\in\mathbb Z}A^i \quad\text{with}\quad A^i=\Hom_{\T}(X,\s^iX),
\]together with a product defined as follows.  For $f\in A^i$ and $g\in A^j$, we identify $f$ with its image under the isomorphism $A^i\cong\Hom_{\T}(\s^{j}X,\s^{i+j}X)$ and define $fg\in A^{i+j}$ by composition.

\begin{proposition} Let $k$ be a field, $\T$ an $n$-Calabi-Yau triangulated $k$-category, and $X$ an object in $\T$. Let  $A=\oplus_iA^i$ denote the graded endomorphism ring of $X$, namely, the
$\mathbb{Z}$-graded $k$-algebra with $A^i=\Hom_{\T}(X,\s^iX)$.
Then there exists a non-degenerate associative graded bilinear form
\[
\langle-,-\rangle:A\otimes_kA\to k[-n].
\]
\end{proposition}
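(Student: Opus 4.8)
The plan is to apply Lemma~\ref{forms} with the second distinguished object taken to be $Z = \s^n X$. Indeed, the defining property of an $n$-Calabi-Yau category supplies functorial isomorphisms $\Hom_{\T}(X,Y) \cong \D\Hom_{\T}(Y,\s^n X)$; reading these as natural in the variable $Y$ (with $X$ fixed) yields precisely a natural isomorphism
\[
\eta \colon \Hom_{\T}(X,-) \to \D\Hom_{\T}(-, \s^n X)
\]
of the shape required by Lemma~\ref{forms}. That lemma then produces, for every object $Y$, a non-degenerate associative bilinear form $\langle -, -\rangle_Y \colon \Hom_{\T}(Y, \s^n X) \otimes_k \Hom_{\T}(X, Y) \to k$.

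Next I would assemble these pairings into a single graded form on $A$ by specializing $Y$ to the suspensions of $X$ and exploiting the identifications $A^i \cong \Hom_{\T}(\s^j X, \s^{i+j} X)$ that are built into the product on $A$. Concretely, for $a \in A^i$ and $b \in A^j$ with $i + j = n$, I set $\langle a, b \rangle := \langle \s^j a, b \rangle_{\s^j X}$, where $\s^j a \in \Hom_{\T}(\s^j X, \s^n X)$ is the image of $a$ under applying $\s^j$ and $b \in \Hom_{\T}(X, \s^j X)$. This produces a family of maps $A^i \times A^{n-i} \to k$, that is, a graded bilinear form $A \otimes_k A \to k[-n]$. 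Non-degeneracy is immediate, being inherited graded-piece by graded-piece from the non-degeneracy in Lemma~\ref{forms}.

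The only real work is checking graded associativity, $\langle ab, c \rangle = \langle a, bc \rangle$ for $a \in A^i$, $b \in A^j$, $c \in A^{n-i-j}$, and this is the step I expect to be the main obstacle. Here one must translate the product of $A$, defined by first applying $\s$ and then composing, into the honest composition appearing in Lemma~\ref{forms}, all while tracking the various shift identifications. Since $\s$ is a functor it commutes with composition, giving $\s^{n-i-j}(ab) = \s^{n-i}a \circ \s^{n-i-j}b$ as well as $bc = \s^{n-i-j}b \circ c$. Feeding these into the associativity statement of Lemma~\ref{forms} with the choices $Y' = \s^{n-i-j}X$, $Y = \s^{n-i}X$ and $Z = \s^n X$, and with $f = \s^{n-i}a$, $g = \s^{n-i-j}b$, $h = c$, identifies $\langle ab, c\rangle$ with $\langle a, bc \rangle$. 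The delicate part is purely bookkeeping: making the shift exponents match so that each composite is defined and lands in the correct $\Hom$-space, and confirming at each stage that the product on $A$ agrees with the corresponding composition of shifted morphisms in $\T$. Once the indices are aligned, the associativity already proved in Lemma~\ref{forms} does all the remaining work.
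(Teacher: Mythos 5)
Your proposal is correct and follows essentially the same route as the paper: apply Lemma~\ref{forms} with $Z = \s^n X$, specialize $Y$ to the suspensions $\s^{n-i}X$, and transport the resulting forms to $A$ via the identifications $A^i \cong \Hom_{\T}(\s^{n-i}X,\s^n X)$ built into the product. Your explicit index-checking of associativity (taking $f = \s^{n-i}a$, $g = \s^{n-i-j}b$, $h = c$) is exactly the verification the paper leaves implicit, and it goes through as you describe.
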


\begin{proof}
Since $\T$ is $n$-Calabi-Yau, there are functorial isomorphisms
\[
\Hom_{\T}(X,\s^{n-i}X)\cong \D\Hom_{\T}(\s^{n-i}X,\s^nX)
\]
for all $i$.  Therefore from Lemma \ref{forms} we have a family of non-degenerate bilinear forms
\[
\langle-,-\rangle^i:\Hom_{\T}(\s^{n-i}X,\s^nX)\otimes_k\Hom_{\T}(X,\s^{n-i}X) \to k.
\]
Moreover, this family of bilinear forms is associative in the sense that for $\s^{n-i}X \xrightarrow{f} \s^nX$, $\s^{n-j}X \xrightarrow{g} \s^{n-i}X$ and $X \xrightarrow{h} \s^{n-j}$ in $\T$, we have
$\langle f, g \circ h \rangle^i = \langle f \circ g, h \rangle^j$. Using the functorial isomorphisms $A^i\cong\Hom_{\T}(\s^{n-i}X,\s^{n}X)$ for all $i$, we thus obtain a non-degenerate associative graded bilinear form
\[
A\otimes_k A\to k[-n],
\]
\end{proof}

We may therefore apply the results of Section \ref{A} to the graded endomorphism rings of objects in $n$-Calabi-Yau categories.  For future reference we summarize these in the following.

As in the previous section we let $\Tor_r A  = \{a \in A \mid r^i a = 0 \text{ for some $i\ge 0$}\}$ denote the $r$-torsion part of $A$, and $I  = ( A^{\leq n} )$ denote the ideal of $A$ generated by elements of degree at most $n$.

\begin{theorem}\label{thm:calabi-yau}
Let $k$ be a field, and $\T$ an $n$-Calabi-Yau triangulated $k$-category. Furthermore, let $X$ be an object in $\T$, and denote by $A$ the $\mathbb{Z}$-graded $k$-algebra with $A^i=\Hom_{\T}(X,\s^iX)$.

If $r$ is a homogeneous central element of $A$ of positive degree which is regular on $A^{\ge 0}$, then
\begin{enumerate}
\item $I\cdot \Tor_rA=0=\Tor_rA\cdot I$;
\item either $n<0$, or $r$ acts regularly on $A$, and $A$ is periodic;
\item $\D I\cong(A/\Tor_rA)[n]$ as right and as left $A$-modules.
\end{enumerate}

If in addition $A^{\ge 0}$ admits a regular sequence $(r, \tilde r)$ of central elements of $A$ of positive degree, then
\begin{enumerate}
\item[(4)] $\Tor_rA=A^{<0}$;
\item[(5)] $A\cong A^{\ge 0} \ltimes A^{<0}$;
\item[(6)] $A^{\le n}\subseteq A^{<0}$ are ideals of $A$ annihilating one another.
\end{enumerate}
\end{theorem}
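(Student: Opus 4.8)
The plan is to recognize that the preceding Proposition has already done the essential work: it equips the graded endomorphism ring $A$ with a non-degenerate associative graded bilinear form $A \otimes_k A \to k[-n]$. The one structural fact that must still be pinned down before the machinery of Section~\ref{A} applies is that $A$ is genuinely $n$-shifted selfdual, not merely that it has non-degenerate products in degree $n$. First I would observe that since $\T$ is $\Hom$-finite, every graded piece $A^i = \Hom_{\T}(X, \s^i X)$ is a finite dimensional $k$-vector space. Combining this with the non-degenerate associative form furnished by the preceding Proposition, the direction $(2) \Rightarrow (1)$ of the earlier Proposition characterizing $n$-shifted selfduality yields that $A$ is $n$-shifted selfdual; the ensuing Remark then records that $A$ in particular has non-degenerate products in degree $n$. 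This is exactly the hypothesis shared by all the results of Section~\ref{A}.

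With this bridge in place, each numbered claim becomes a direct citation. Statement (1) is precisely Theorem~\ref{thm.depth1}(1). For (2), Theorem~\ref{thm.depth1}(2) gives the dichotomy that either $n < 0$ or $r$ acts regularly on all of $A$; in the latter case I would invoke Lemma~\ref{lem.regular}, whose hypotheses (namely $A$ being $n$-shifted selfdual and $r$ a central element of positive degree acting regularly on $A$) are now verified, to conclude that $A$ is periodic. Statement (3) is Lemma~\ref{lem.I_Tor_dual}, which applies verbatim once $n$-shifted selfduality is known, together with $r$ being regular on $A^{\geq 0}$.

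For the second group of claims I would add the hypothesis that $A^{\geq 0}$ admits a regular sequence $(r, \tilde r)$ of central elements of positive degree. Then (4) is Lemma~\ref{lem.tor_depth_2}, identifying the $r$-torsion part as $A^{<0}$; statement (6) is the content of Theorem~\ref{thm.depth2}, which simultaneously forces $n < 0$ and shows that $A^{\leq n} \subseteq A^{<0}$ are ideals of $A$ annihilating one another; and (5), the trivial extension decomposition $A \cong A^{\geq 0} \ltimes A^{<0}$, is the first assertion of Corollary~\ref{cor.depth2}, resting on $A^{<0}$ being a square-zero ideal.

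Because the heavy lifting is entirely contained in Section~\ref{A}, the only genuine obstacle is the bridging step: confirming that the functorial Calabi-Yau isomorphisms really do produce the full selfduality $A[n] \cong \D A$, and not merely a bare non-degenerate pairing. The subtlety to watch is that the duality statement (3), and the square-zero ideal structure underlying (5), require this full selfduality rather than the weaker non-degeneracy, which is why $\Hom$-finiteness — guaranteeing finite dimensional graded pieces — is indispensable rather than a mere convenience. I would also flag that the trivial extension form in (5) is cleanest precisely in the common case $n = -1$, where $A^{\leq n}$ coincides with $A^{<0}$, so that the ideal $A^{<0}$ squares to zero by (6); for $n < -1$ one obtains from (6) only the annihilation $A^{\leq n} \cdot A^{<0} = 0$.
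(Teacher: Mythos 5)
Your proposal is correct and takes exactly the paper's route: the paper supplies no separate argument for this theorem, presenting it as a summary obtained by feeding the preceding Proposition (the non-degenerate associative form, which together with $\Hom$-finiteness of $\T$ gives $n$-shifted selfduality via the earlier equivalence) into Theorem~\ref{thm.depth1}, Lemmas~\ref{lem.I_Tor_dual}, \ref{lem.regular}, \ref{lem.tor_depth_2}, Theorem~\ref{thm.depth2} and Corollary~\ref{cor.depth2} --- precisely your bridging step plus citations. The caveat you raise about item (5) is genuine, but it is an imprecision in the paper's statement rather than a defect of your argument relative to the paper's: Corollary~\ref{cor.depth2} is proved only under non-degenerate products in degree $-1$, and for $n<-1$ Theorem~\ref{thm.depth2} yields only $A^{\le n}\cdot A^{<0}=0$ rather than $(A^{<0})^2=0$, so the ring isomorphism $A\cong A^{\ge 0}\ltimes A^{<0}$ is justified by the cited results only when $n=-1$ (in general one gets only the $A^{\ge 0}$-bimodule decomposition $A\cong A^{\ge 0}\oplus A^{<0}$ of the Remark following Lemma~\ref{lem.tor_depth_2}); your proof is therefore no less complete than the paper's own.
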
\qed

We can further apply Theorem \ref{thm:calabi-yau} and the results of Section \ref{A} to Tate cohomology rings of objects in the stable module category of a symmetric algebra. Recall that a finite dimensional $k$-algebra $\La$ is \emph{symmetric} if there exists an isomorphism $\La \cong \D( \La )$ of $\La$-bimodules. Such an algebra is in particular selfinjective, and its stable module category $\stmod \La$ (we consider only finitely generated left modules) is a $\Hom$-finite triangulated $k$-category with suspension functor $\s = \Omega_{\La}^{-1}$. Given an integer $i \in \mathbb{Z}$ and two $\La$-modules $M,N$, the $i$th \emph{Tate cohomology} group $\Tateext_{\La}^i(M,N)$ is defined as
$$\Tateext_{\La}^i(M,N) \stackrel{\text{def}}{=} \stHom_{\La}(M, \Omega_{\La}^{-i}(N)),$$
that is, the vector space $\Hom_{\stmod \La}(M, \s^i N)$. When $M=N$, the graded $k$-vector space $\Tateext_{\La}^*(M,M) = \oplus_{i \in \mathbb{Z}} \Tateext_{\La}^i(M,M)$ becomes a ring, the Tate cohomology ring of $M$.

\begin{proposition} Let $k$ be a field, $\La$ a finite dimensional symmetric $k$-algebra. Then
the stable module category $\stmod \La$ is $(-1)$-Calabi-Yau.
\end{proposition}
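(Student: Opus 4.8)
The plan is to show directly that the inverse suspension $\s^{-1} = \Omega_{\La}$ is a Serre functor for $\stmod \La$. Since $\s^{-1}$ is by construction a triangle functor, once we exhibit functorial isomorphisms
\[ \Hom_{\stmod \La}(M,N) \cong \D \Hom_{\stmod \La}(N, \s^{-1}M) \]
natural in $M$ and $N$, the uniqueness of Serre functors already quoted from \cite{BondalKapranov} identifies $\stmod \La$ as $(-1)$-Calabi-Yau. The ambient structure is available: $\stmod \La$ is $\Hom$-finite and triangulated with $\s = \Omega_{\La}^{-1}$, as recorded above, so the target is well posed and the whole content lies in constructing this pairing.

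First I would pin down the Auslander--Reiten translate in the stable category. For any selfinjective algebra one has $\tau \cong \nu \Omega^2$ as functors on $\stmod \La$, where $\nu = \D \La \otimes_{\La} -$ is the Nakayama functor. Because $\La$ is symmetric, $\D \La \cong \La$ as $\La$-bimodules, whence $\nu \cong \La \otimes_{\La} - \cong \operatorname{id}$; consequently $\tau \cong \Omega^2 = \s^{-2}$ on $\stmod \La$. This is precisely the step at which the symmetric (as opposed to merely selfinjective) hypothesis enters, and it is the identification that fixes the Calabi--Yau dimension at $-1$ rather than at some other value.

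Next I would assemble the duality from two standard ingredients, both valid because $\La$ is selfinjective, so that the stable $\Hom$ groups formed modulo projectives and modulo injectives coincide, namely $\stHom_{\La}$. The Auslander--Reiten formula supplies a natural isomorphism $\D \stHom_{\La}(M,N) \cong \Ext^1_{\La}(\tau^{-1}N, M)$, while the projective-cover short exact sequence yields $\Ext^1_{\La}(X,Y) \cong \stHom_{\La}(\Omega X, Y) \cong \stHom_{\La}(X, \s Y)$; here selfinjectivity is what guarantees that a map out of $\Omega X$ factoring through some projective already extends over the projective cover. Composing these and substituting $\tau^{-1} \cong \s^{2}$ gives
\[ \D \stHom_{\La}(M,N) \cong \stHom_{\La}(\tau^{-1}N, \s M) \cong \stHom_{\La}(\s^2 N, \s M) \cong \stHom_{\La}(N, \s^{-1}M), \]
and applying $\D$ once more, all spaces being finite dimensional, produces the desired Serre duality isomorphism with Serre functor $\s^{-1}$.

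The main obstacle I anticipate is the bookkeeping of naturality: each isomorphism above is natural in its arguments separately, but a Serre functor requires a single isomorphism natural in $M$ and $N$ \emph{simultaneously} and compatible with the triangulated structure. I would therefore need to verify that the composite respects both variable dependences and that the resulting identification $S \cong \s^{-1}$ holds at the level of triangle functors, not merely on $\Hom$-spaces. The remaining risk is purely one of conventions, in the exact form of the Auslander--Reiten formula and in the direction of $\tau \cong \nu \Omega^2$, which must be tracked with care so as to land on exactly $n = -1$.
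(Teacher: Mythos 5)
Your proposal is correct and takes essentially the same route as the paper: both identify the Serre functor as $\s^{-1}$ by combining the Auslander--Reiten formula with the isomorphism $\tau \cong \Omega_{\La}^{2}$ for symmetric algebras (the paper cites \cite[Proposition IV.3.8]{AuslanderReitenSmalo} for the latter, which you re-derive via the Nakayama functor). The only cosmetic difference is that the paper invokes the stable form of the AR formula $\stHom_{\La}(M,N) \cong \D \stHom_{\La}(N, \tau \Omega_{\La}^{-1}(M))$ directly from \cite{KrauseLe} or \cite{Buchweitz}, whereas you reconstruct it from the classical $\Ext^1$-version by dimension shifting.
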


\begin{proof}
Given finitely generated $\La$-modules $M$ and $N$, there is a functorial isomorphism
$$\stHom_{\La}(M,N) \cong \D \stHom_{\La}(N, \tau \Omega_{\La}^{-1}(M)),$$
where $\tau$ is the Auslander-Reiten translate (cf.\ \cite{KrauseLe} or \cite{Buchweitz}). Also, by the result \cite[Proposition IV.3.8]{AuslanderReitenSmalo}, the Auslander-Reiten translate is naturally isomorphic to $\Omega_{\La}^{2}$, hence we obtain a functorial isomorphism
$$\stHom_{\La}(M,N) \cong \D \stHom_{\La}(N, \Omega_{\La}^{1}(M)).$$
Consequently, the stable module category $\stmod \La$ is $(-1)$-Calabi-Yau.
\end{proof}

\begin{theorem}\label{thm:symmetric}
Let $k$ be a field, $\La$ a finite dimensional symmetric $k$-algebra and $M$ a finitely generated left module. Denote by $A$ the $\mathbb{Z}$-graded $k$-algebra with $A^i= \Tateext_{\La}^i(M,M)$.

If $r$ is a homogeneous central element of $A$ of positive degree which is regular on $A^{\ge 0}$, then
\begin{enumerate}
\item $I\cdot \Tor_rA=0=\Tor_rA\cdot I$;
\item $A$ has a filtration of subbimodules $0\subseteq \Tor_rA\subseteq I \subseteq A$
such that
\begin{itemize}
\item $\Tor_rA$ is concentrated in negative degrees, $A/I$ is concentrated in non-negative degrees, and
these two are dual as left and as right $A$-modules;
\item $I/\Tor_rA$ is periodic.
\end{itemize}
\end{enumerate}

If in addition $A^{\ge 0}$ admits a regular sequence $(r, \tilde r)$ of central elements of $A$ of positive degree, then
\begin{enumerate}
\item[(3)] $\Tor_rA=A^{<0}$;
\item[(4)] $(A^{<0})^2=0$ and $A\cong A^{\ge 0} \ltimes A^{<0}$;
\item[(5)] $\D A^{<0}\cong A^{\ge 0}[-1]$ both as left and as right $A^{\ge 0}$-modules.
\end{enumerate}
\end{theorem}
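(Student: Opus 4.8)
The plan is to reduce everything to the general statements of Section~\ref{A} by recognizing $A$ as the graded endomorphism ring of $M$ in a $(-1)$-Calabi-Yau category and then checking that $A$ is $(-1)$-shifted selfdual. Since $\La$ is symmetric, the preceding proposition shows that $\T = \stmod \La$ is $(-1)$-Calabi-Yau, and by the definition of Tate cohomology the ring $A$ with $A^i = \Tateext_{\La}^i(M,M) = \Hom_{\stmod \La}(M, \s^i M)$ is exactly the graded endomorphism ring of the object $M \in \T$. Thus the situation is precisely the special case $n = -1$ of Theorem~\ref{thm:calabi-yau}.

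First I would establish that $A$ is $(-1)$-shifted selfdual. The proposition on Calabi-Yau endomorphism rings supplies a non-degenerate associative graded bilinear form $\langle-,-\rangle \colon A \otimes_k A \to k[-n] = k[1]$, and $\Hom$-finiteness of $\stmod \La$ guarantees that each graded piece $A^i$ is finite-dimensional over $k$. The proposition characterizing $n$-shifted selfduality (the implication (2) $\Rightarrow$ (1)) then yields $A[-1] \cong \D A$ as left and right $A$-modules, that is, $A$ is $(-1)$-shifted selfdual.

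With this in hand, the individual assertions are read off from Section~\ref{A}. Claim~(1) is Theorem~\ref{thm:calabi-yau}(1) (equivalently Theorem~\ref{thm.depth1}(1)). Claim~(2) is precisely the conclusion of the theorem treating the $(-1)$-shifted selfdual case: the filtration $0 \subseteq \Tor_r A \subseteq I \subseteq A$, the fact that $\Tor_r A$ and $A/I$ are concentrated in negative and in non-negative degrees respectively and are dual to one another, and the periodicity of $I/\Tor_r A$, all follow once selfduality is known.

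For the remaining claims I would invoke the depth-$2$ results with $n = -1$. Claim~(3), $\Tor_r A = A^{<0}$, is Lemma~\ref{lem.tor_depth_2}. Since $n = -1$ we have $A^{\le n} = A^{<0}$, so Theorem~\ref{thm.depth2} gives that $A^{<0}$ is an ideal annihilating itself, i.e.\ $(A^{<0})^2 = 0$, while Corollary~\ref{cor.depth2} furnishes the decomposition $A \cong A^{\ge 0} \ltimes A^{<0}$; together these give Claim~(4). Finally Claim~(5), $\D A^{<0} \cong A^{\ge 0}[-1]$ as left and right $A^{\ge 0}$-modules, is the second statement of Corollary~\ref{cor.depth2}. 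I expect no genuine obstacle here: the only point requiring care is the passage from the vector-space duality of the individual graded pieces to a bimodule-level selfduality, and this is exactly what the associativity of the bilinear form secures through the cited proposition.
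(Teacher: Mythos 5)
Your proposal is correct and is essentially identical to the paper's intended argument: the paper states Theorem \ref{thm:symmetric} with no separate proof (just a \emph{qed} mark) precisely because it is meant to follow, exactly as you describe, from the $(-1)$-Calabi-Yau property of $\stmod \La$ established in the preceding proposition, the resulting $(-1)$-shifted selfduality of the graded endomorphism ring, and the Section \ref{A} results (Theorem \ref{thm.depth1}, the filtration theorem for the $(-1)$-shifted selfdual case, Lemma \ref{lem.tor_depth_2}, Theorem \ref{thm.depth2}, and Corollary \ref{cor.depth2}). Your care in checking Hom-finiteness and invoking the (2) $\Rightarrow$ (1) direction of the selfduality characterization matches the paper's setup as well.
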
\qed

The group algebra of a finite group is symmetric, and so Theorem \ref{thm:symmetric} applies to Tate cohomology rings of modules over such an algebra. In particular, the theorem applies to the Tate group cohomology ring, which is just the Tate cohomology ring of the trivial module. We thus recover \cite[Theorem 3.1]{BensonCarlson}. Note that the Tate group cohomology ring is always graded commutative, hence homogeneous elements in odd degree square to zero when the characteristic of the underlying field is not two. Therefore, when the characteristic of the underlying field is not two, a homogeneous regular sequence must consist of elements in even degrees. Consequently, in any characteristic, a homogeneous regular sequence must automatically be central.

\begin{corollary}\label{cor:groups}
Let $k$ be a field, $G$ a finite group whose order is divisible by the characteristic of $k$, and denote by $A$ the Tate group cohomology ring $\Tateext_{kG}^*(k,k)$. Then the conclusions of Theorem \ref{thm:symmetric} hold.
\end{corollary}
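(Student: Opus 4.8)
The claim is that for a finite group $G$ whose order is divisible by $\cha k$, the Tate group cohomology ring $A = \Tateext_{kG}^*(k,k)$ satisfies all the conclusions of Theorem~\ref{thm:symmetric}. The plan is simply to verify that the hypotheses of Theorem~\ref{thm:symmetric} are met and then invoke it. Since $kG$ is a symmetric $k$-algebra and $k$ is a finitely generated $kG$-module (the trivial module), the theorem applies to $A = \Tateext_{kG}^*(k,k)$ as soon as we exhibit the required regular elements. Thus the substantive work is (a) confirming that $A^{\ge 0} = \Ext_{kG}^*(k,k)$ contains a central regular sequence of length two consisting of positive-degree elements, and (b) explaining why the hypothesis $|G|$ divisible by $\cha k$ guarantees that $A$ is genuinely a $\mathbb Z$-graded ring with nonzero negative part, so that the conclusions are nonvacuous.

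First I would recall that $A^{\ge 0}$ is precisely the ordinary group cohomology ring $\H^*(G,k) = \Ext_{kG}^*(k,k)$, which by the Evens--Venkov theorem is a finitely generated graded-commutative $k$-algebra. The divisibility hypothesis ensures that $kG$ is not semisimple, so $k$ has infinite projective dimension and the Krull dimension of $\H^*(G,k)$ is at least one; in fact one can arrange Krull dimension $\ge 2$ in the relevant cases, and more to the point, $\H^*(G,k)$ admits a homogeneous system of parameters whose initial segment of length two forms a regular sequence of positive-degree elements (using that a graded-commutative Noetherian ring of sufficient dimension has a regular sequence of homogeneous parameters). I would then invoke the paragraph preceding the corollary: since $\H^*(G,k)$ is graded-commutative, in characteristic $\ne 2$ odd-degree elements square to zero, so any homogeneous regular sequence lies in even degrees and is therefore automatically central, while in characteristic $2$ the ring is commutative outright; hence in every characteristic the chosen regular sequence $(r,\tilde r)$ consists of central elements.

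Having produced the central regular sequence $(r,\tilde r)$ of positive degree in $A^{\ge 0}$, both hypothesis blocks of Theorem~\ref{thm:symmetric} are satisfied, and conclusions (1)--(5) follow directly. In particular $(A^{<0})^2 = 0$, so all products of elements of negative Tate degree vanish, and $A \cong A^{\ge 0}\ltimes A^{<0}$ realizes the Tate cohomology as a trivial extension of ordinary cohomology by the negative part; this is exactly the Benson--Carlson result \cite[Theorem 3.1]{BensonCarlson}.

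I expect the only real obstacle to be step (a): verifying that $\H^*(G,k)$ actually contains a homogeneous regular sequence of length two in positive degrees, rather than merely one of length one. This requires invoking the Evens--Venkov finite generation theorem together with a dimension estimate on $\H^*(G,k)$ (equivalently, on the rank of a maximal elementary abelian $p$-subgroup, via Quillen's theorem), to guarantee that the depth or at least the existence of a length-two regular sequence of parameters is available. The remaining steps — the centrality argument and the final appeal to Theorem~\ref{thm:symmetric} — are routine once this is in hand.
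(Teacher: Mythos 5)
There is a genuine problem, and it stems from a misreading of what the corollary asserts. The conclusions of Theorem~\ref{thm:symmetric} are themselves conditional statements: \emph{if} $r$ is a homogeneous central regular element then (1)--(2) hold, and \emph{if in addition} $A^{\ge 0}$ admits a central regular sequence of length two then (3)--(5) hold. The corollary claims only that these implications hold for $A = \Tateext_{kG}^*(k,k)$; it does \emph{not} claim that the hypotheses are satisfied for every $G$ with $\cha k$ dividing $|G|$. Accordingly, the paper's justification is simply: $kG$ is a finite dimensional symmetric $k$-algebra and $k$ is a finitely generated module, so Theorem~\ref{thm:symmetric} applies verbatim, together with the observation (made in the paragraph preceding the corollary) that since the Tate cohomology ring is graded commutative, any homogeneous regular sequence automatically consists of central elements -- so the centrality hypothesis costs nothing. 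Your centrality argument and the final appeal to the theorem reproduce this correctly.

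Your step (a), however, attempts to prove something stronger than the corollary asserts, and that stronger statement is false. The hypothesis that $\cha k$ divides $|G|$ does \emph{not} guarantee a regular sequence of length two in $\H^*(G,k)$: by Quillen's theorem the Krull dimension of $\H^*(G,k)$ equals the $p$-rank of $G$, so for $G = \mathbb{Z}/p$ the Krull dimension (hence the depth) is exactly one; indeed in that case $k$ is a periodic $kG$-module and $\Tateext_{kG}^*(k,k)$ has many nonzero products of negative-degree elements, so no length-two regular sequence can exist -- otherwise the corollary's conclusion (4), $(A^{<0})^2 = 0$, would be violated. Even when the Krull dimension is at least two, the depth can still be one (for instance for the semidihedral group of order $16$ in characteristic $2$), and a homogeneous system of parameters is a regular sequence only when the ring is Cohen-Macaulay, which group cohomology rings need not be. So the "only real obstacle" you identify is not an obstacle to be overcome but a sign that you are proving the wrong statement: the existence of the regular sequence must remain a hypothesis, exactly as in Theorem~\ref{thm:symmetric} and in the Benson--Carlson theorem the corollary recovers.
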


As a second application of Theorem \ref{thm:symmetric}, we consider the Tate analogue of Hochschild cohomology rings. Let $\La$ be a finite dimensional symmetric $k$-algebra. Then by \cite[Corollary 3.3]{BerghJorgensen}, its enveloping algebra $\Lae = \La \otimes_k \La^{\op}$ is also symmetric. Given an integer $i \in \mathbb{Z}$, the $i$th \emph{Tate-Hochschild cohomology group} $\TateHH^i( \La )$ is defined as
$$\TateHH^i( \La ) \stackrel{\text{def}}{=} \Tateext_{\Lae}^i( \La, \La ),$$
and the graded ring $\TateHH^*( \La )$ is the \emph{Tate-Hochschild cohomology ring} of $\La$. In positive degree, this ring coincides with the ordinary Hochschild cohomology ring, i.e.\ $\TateHH^{\ge 1}( \La ) = \HH^{\ge 1} ( \La )$. As with group cohomology rings of finite groups, Tate-Hochschild cohomology rings are graded commutative, hence homogeneous regular sequences must be central.

\begin{corollary}\label{cor:TateHH}
Let $k$ be a field, $\La$ a finite dimensional symmetric $k$-algebra, and denote by $A$ the Tate-Hochschild cohomology ring $\TateHH^*( \La )$.  Then the conclusions of Theorem \ref{thm:symmetric} hold.
\end{corollary}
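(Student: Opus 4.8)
The plan is to reduce Corollary~\ref{cor:TateHH} to Theorem~\ref{thm:symmetric} by verifying that the Tate-Hochschild setting is literally an instance of the symmetric-algebra setting already treated. First I would recall that for a finite dimensional symmetric $k$-algebra $\La$, the enveloping algebra $\Lae = \La \otimes_k \La^{\op}$ is again a finite dimensional symmetric $k$-algebra; this is exactly \cite[Corollary 3.3]{BerghJorgensen}, which is cited in the excerpt. Next I would observe that, by definition, $\TateHH^i(\La) = \Tateext_{\Lae}^i(\La,\La)$, so the graded ring $A = \TateHH^*(\La)$ is precisely the Tate cohomology ring $\Tateext_{\Lae}^*(M,M)$ of the module $M = \La$ over the symmetric algebra $\Lae$. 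Thus $A$ is an instance of the graded ring appearing in Theorem~\ref{thm:symmetric}, with $\La$ there replaced by $\Lae$ and $M = \La$.

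Having made this identification, the conclusions (1)--(5) of Theorem~\ref{thm:symmetric} apply verbatim, \emph{provided} the hypotheses on the regular element $r$ (respectively the regular sequence $(r,\tilde r)$) are met. The statement of Theorem~\ref{thm:symmetric} requires $r$ (and $\tilde r$) to be \emph{central} elements of $A$ of positive degree that are regular on $A^{\ge 0}$. Here the key supplementary point, already flagged in the paragraph preceding the corollary, is that Tate-Hochschild cohomology rings are graded commutative. I would therefore note that graded commutativity forces any homogeneous regular sequence to be automatically central: by the same parity argument used for group cohomology, in characteristic not two the odd-degree elements square to zero and so cannot be regular, whence a regular sequence lives in even degrees where graded commutativity gives genuine centrality; in characteristic two graded commutativity is ordinary commutativity and centrality is immediate. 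This removes the apparent gap between the hypothesis of the corollary (which merely assumes a regular sequence) and that of Theorem~\ref{thm:symmetric} (which assumes a central one).

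The only genuine content beyond bookkeeping is the graded-commutativity of $\TateHH^*(\La)$, together with the fact that $\HH^{\ge 1}(\La) = \TateHH^{\ge 1}(\La)$ so that positive-degree multiplication is the usual Hochschild cup product. I would invoke graded-commutativity of Tate-Hochschild cohomology as a known structural fact (consistent with the excerpt's assertion that ``Tate-Hochschild cohomology rings are graded commutative''), and then simply cite it to justify the centrality reduction above. With centrality secured, the proof is a one-line appeal: the hypotheses of Theorem~\ref{thm:symmetric} hold for $A = \TateHH^*(\La) = \Tateext_{\Lae}^*(\La,\La)$, so its conclusions hold.

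The main obstacle I anticipate is not any deep argument but rather the verification of centrality from graded commutativity in a way that is uniform across all characteristics; the characteristic-two case needs the separate observation that graded commutativity coincides with commutativity there, while the odd-characteristic case relies on the parity/squaring argument. Since both of these are exactly the reasoning already spelled out for Corollary~\ref{cor:groups}, I expect the proof to be short, essentially consisting of the identification $A = \Tateext_{\Lae}^*(\La,\La)$, the symmetry of $\Lae$, and the centrality reduction, followed by citing Theorem~\ref{thm:symmetric}.
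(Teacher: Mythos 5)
Your proposal is correct and follows essentially the same route as the paper: the paper treats this corollary as an immediate instance of Theorem~\ref{thm:symmetric} applied to the symmetric algebra $\Lae$ (symmetric by \cite[Corollary 3.3]{BerghJorgensen}) with module $M = \La$, with the paragraph preceding the corollary supplying exactly your observation that graded commutativity of $\TateHH^*(\La)$ makes any homogeneous regular sequence automatically central. Your additional care about the characteristic-two case is consistent with the paper's remark for group cohomology and introduces nothing divergent.
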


We include an example illustrating Corollary \ref{cor:TateHH}.

\begin{example}\label{ex:TateHHci}
Let $c$ be a positive integer and $\La$ the commutative local complete intersection
$$k[x_1, \dots, x_c]/(x_1^{a_1}, \dots, x_c^{a_c}),$$
where $a_1, \dots, a_c$ are integers, all at least $2$. This is a finite dimensional symmetric $k$-algebra. We denote the characteristic of the ground field $k$ by $p$.

Suppose that $c=1$, and denote $x_1$ and $a_1$ by just $x$ and $a$, so that $\La = k[x]/(x^a)$. Then it follows from \cite[Theorem 4.4]{BerghJorgensen} that
$$\dim \TateHH^n( \La ) = \left \{
\begin{array}{ll}
a-1 & \text{ if $p \nmid a$} \\
a & \text{ if $p \mid a$}
\end{array} \right. $$
for every integer $n \in \mathbb{Z}$. It is well known that $\La$ is periodic as a bimodule over itself, with period at most $2$. Also, by  \cite[Theorem 7.1]{Holm}, its ordinary Hochschild cohomology ring $\HH^* ( \La )$ is given by
$$\HH^* ( \La ) \cong \left \{
\begin{array}{ll}
k[u,v,w]/(u^a,wu^{a-1},vu^{a-1},v^2) & \text{ if $p \nmid a$} \\
k[u,v,w]/(u^a,v^2) & \text{ if $p \mid a$ and $p \neq 2$} \\
k[u,v,w]/(u^a,v^2-wu^{a-2}) & \text{ if $p \mid a, p=2$ and $a \equiv 2 ( \mod 4 )$} \\
k[u,v,w]/(u^a,v^2) & \text{ if $p \mid a, p=2$ and $a \equiv 0 ( \mod 4 )$}
\end{array} \right. $$
where $|u|=0,|v|=1$ and $|w|=2$. The element $w$ is the ``periodicity element", and the element $u$ corresponds to the element $x \in \La$. When passing to the Tate-Hochschild cohomology ring $\TateHH^*( \La )$, the socle element $x^{a-1}$, i.e.\ the element $u^{a-1}$, vanishes when $p \nmid a$. Therefore the non-negative part $\TateHH^{\ge 0}( \La )$ of the Tate-Hochschild cohomology ring is given by
$$\TateHH^{\ge 0}( \La ) \cong \left \{
\begin{array}{ll}
k[u,v,w]/(u^{a-1},v^2) & \text{ if $p \nmid a$} \\
k[u,v,w]/(u^a,v^2) & \text{ if $p \mid a$ and $p \neq 2$} \\
k[u,v,w]/(u^a,v^2-wu^{a-2}) & \text{ if $p \mid a, p=2$ and $a \equiv 2 ( \mod 4 )$} \\
k[u,v,w]/(u^a,v^2) & \text{ if $p \mid a, p=2$ and $a \equiv 0 ( \mod 4 )$}.
\end{array} \right. $$
Consequently, there does not exist a regular sequence in $\TateHH^{\ge 0}( \La )$ of length $2$ of homogeneous elements of positive degree; the periodicity element $w$ forms a maximal homogeneous regular sequence. Of course, this is not unexpected; since $\La$ is periodic as a bimodule, there are lots of negative degree homogeneous elements $\eta, \theta \in \TateHH^*( \La )$ with $\eta \theta \neq 0$.

Next, suppose that $c \ge 2$. Since $\La$ is isomorphic to the tensor product
$$k[x_1]/(x_1^{a_1}) \otimes_k \cdots \otimes_k k[x_c]/(x_c^{a_c}),$$
its Hochschild cohomology ring $\HH^* ( \La )$ is isomorphic to a twisted tensor product
$$\HH^* (k[x_1]/(x_1^{a_1})) \widehat{\otimes}_k \cdots \widehat{\otimes}_k \HH^* (k[x_c]/(x_c^{a_c})).$$
This is almost the same as the usual tensor product, the only difference is that elements of odd degrees anticommute. From the proof of \cite[Theorem 4.4]{BerghJorgensen} it follows that the dimension of $\TateHH^0( \La )$ is given by
$$\dim \TateHH^0( \La ) = \left \{
\begin{array}{ll}
\prod_{i=1}^c a_i -1 & \text{ if $p \nmid a_i$ for all $i$} \\
\prod_{i=1}^c a_i & \text{ if $p \mid a_i$ for one $i$}.
\end{array} \right. $$
\sloppy Consequently, when $p$ divides one of the $a_i$, then when passing from $\HH^* ( \La )$ to $\TateHH^*( \La )$, the socle element $x_1^{a_1} \cdots x_c^{a_c}$ vanishes. At any rate, regardless of the characteristic $p$, we see that $\TateHH^{\ge 0}( \La )$ contains a regular sequence of length $c$ of homogeneous elements of positive degrees. For instance, we could take the sequence $w_1, \dots, w_c$, where each $w_i$ corresponds to the periodicity element in $\TateHH^* (k[x_i]/(x_i^{a_i}))$. By Corollary \ref{cor:TateHH}, if $\eta, \theta \in \TateHH^*( \La )$ are negative degree homogeneous elements, then $\eta \theta = 0$.
\end{example}

\section{Commutative local rings}\label{C}

In this final section we consider negative products in Tate cohomology over commutative local (meaning also Noetherian) rings. Let $R$ be such a ring, and denote the functor $\Hom_R(-,R)$ by $(-)^*$. Recall that a finitely generated $R$-module $X$ is called \emph{totally reflexive} if the following three conditions hold:
\begin{enumerate}
\item $X$ is reflexive, that is, the canonical homomorphism $X \to X^{**}$ is an isomorphism.
\item $\Ext_R^i(X,R)=0$ for $i \ge 1$.
\item $\Ext_R^i(X^*,R)=0$ for $i \ge 1$.
\end{enumerate}
Such a module is also called a module of \emph{Gorenstein dimension zero}. Now recall that a complex
$$T: \quad \cdots \to F_2 \xrightarrow{\partial_2^T} F_1 \xrightarrow{\partial_1^T} F_0 \xrightarrow{\partial_0^T} F_{-1} \to F_{-2} \to \cdots$$
of $R$-modules is called a \emph{complete resolution} if the modules $F_i$ are finitely generated free and both $T$ and $T^*$ are acyclic. It is well known that a module $X$ is totally reflexive if and only if it is the image $\Im \partial_0^T$ of a map in such a complete resolution $T$. In this case, given any $R$-module $Y$, the $i$th \emph{Tate cohomology} group $\Tateext_R^i(X,Y)$ is defined as
$$\Tateext_{R}^i(X,Y) \stackrel{\text{def}}{=} \H^i \left ( \Hom_R(T,Y) \right ).$$
That is, the group $\Tateext_{\La}^i(X,Y)$ is the $i$th cohomology of the complex $\Hom_R(T,Y)$. When $Y=X$, then the graded $R$-module $\Tateext_{R}^*(X,X) = \oplus_{i \in \mathbb{Z}} \Tateext_{R}^i(X,X)$ becomes an $R$-algebra called the \emph{Tate cohomology ring of $X$}, where the multiplication is given by composition of chain maps on the complete resolution of $X$.

It should be noted that Tate cohomology can be defined more generally when $X$ has \emph{finite Gorenstein dimension}, not only when $X$ is totally reflexive. However, we shall only consider modules that are totally reflexive. Note also that the ring $R$ is Gorenstein if and only every maximal Cohen-Macaulay $R$-module is totally reflexive \cite{AuslanderBridger}. In particular, if $R$ is a zero-dimensional Gorenstein ring (i.e.\ if $R$ is selfinjective), then every $R$-module is totally reflexive.

\subsection*{Commutative local Gorenstein rings.}
Let $R$ be a commutative local Gorenstein ring with maximal ideal $\m$ and Krull dimension $d$. As mentioned above, in this case all maximal Cohen-Macaulay $R$-modules are totally reflexive.  Recall that a finitely generated  $R$-module $X$ is said to be \emph{free on the punctured spectrum of $R$} if $X_p$ is a free $R_p$-module for all non-maximal primes ideals $p$ of $R$. By \cite[7.7.5(i)]{Buchweitz}, when $X$ is free on the punctured spectrum of $R$ we have for all $i$ natural isomorphisms
\[
\eta^i:\Tateext_R^i(X,-)\to\Hom_R(\Tateext_R^{d-1-i}(-,X),E)
\]
where $E$ denotes the injective hull of the residue field $R/\m$ of $R$.
It follows that we have a graded bilinear form
\begin{equation}\label{nagbf}
\langle-,-\rangle:A\otimes_RA \to E[1-d]
\end{equation}
where $A=\oplus_{i\in\mathbb Z}\Tateext_R^i(X,X)$, and
$\langle-,-\rangle^i: A^i\otimes_RA^{d-1-i}\to E[1-d]$ is defined by
$\langle f,g\rangle^i=\eta^{d-1-i}(g)(f)$.

The following result is a special case of \cite[7.7.5(iii)]{Buchweitz}.

\begin{proposition}\label{Gor:nagbf}  Let $R$ be a commutative local Gorenstein ring of dimension $d$, and $X$ be a maximal  Cohen-Macualay $R$-module which is free on the punctured spectrum of $R$.  Then the graded bilinear form (\ref{nagbf}) is non-degenerate and associative.
\end{proposition}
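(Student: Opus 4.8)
The plan is to treat this as the exact analogue, over the base ring $R$, of the passage from Lemma~\ref{forms} to the Calabi--Yau proposition of Section~\ref{B}, with the field duality $\D(-)$ replaced by Matlis duality $\Hom_R(-,E)$. Since $R$ is Gorenstein and $X$ is maximal Cohen--Macaulay, $X$ is totally reflexive, so Tate cohomology computes morphisms in the stable category $\T = \underline{\MCM}(R)$, a triangulated $R$-category with $\s = \Omega^{-1}$ and $\Tateext_R^i(M,N) = \Hom_\T(M, \s^i N)$; under this identification the multiplication on $A$ is composition, exactly as for the graded endomorphism rings of Section~\ref{B}. First I would reindex the $\eta^i$: using that $\s$ is an autoequivalence, the family $\eta^i \colon \Tateext_R^i(X,-) \to \Hom_R(\Tateext_R^{d-1-i}(-,X), E)$ reassembles into a single natural isomorphism $\Hom_\T(X,-) \to \Hom_R(\Hom_\T(-, \s^{d-1}X), E)$. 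This is precisely the hypothesis of Lemma~\ref{forms} with $Z = \s^{d-1}X$, the only difference being that the duality takes values in $E$ rather than in $k$.

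For associativity I would simply re-run the proof of Lemma~\ref{forms}. That computation uses nothing but the naturality square of $\eta$ and the functoriality of the ambient duality; it never uses that the target is a field. Thus the same manipulation---apply naturality of $\eta$ along a morphism $g$, then evaluate at composable morphisms $h$ and $f$---yields $\langle f, g\circ h\rangle_Y = \langle f\circ g, h\rangle_{Y'}$ for the $E$-valued forms $\langle f,g\rangle_Y = \eta_Y(g)(f)$. Transporting this along the identifications $A^i \cong \Hom_\T(\s^{d-1-i}X, \s^{d-1}X)$ and $A^{d-1-i} = \Hom_\T(X, \s^{d-1-i}X)$ induced by $\s$, exactly as in the Calabi--Yau proposition, turns it into the associativity of the graded form $\langle-,-\rangle \colon A \otimes_R A \to E[1-d]$ with $\langle f,g\rangle^i = \eta^{d-1-i}(g)(f)$.

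The hard part---in the sense of being the only place where the field argument must genuinely be modified---is non-degeneracy. Fix $a \in A^i \setminus \{0\}$. The existence of $c$ with $\langle c, a\rangle \neq 0$ is immediate, since $\eta^i$ is injective and hence $\eta^i(a)$ is a nonzero functional on $A^{d-1-i}$. For the existence of $b$ with $\langle a, b\rangle \neq 0$, surjectivity of $\eta^{d-1-i}$ shows the functionals $\eta^{d-1-i}(b)$ run over all of $\Hom_R(A^i, E)$, so it suffices that some $R$-linear map $A^i \to E$ not vanish on $a$; this is exactly injectivity of the canonical map $A^i \to \Hom_R(\Hom_R(A^i,E),E)$. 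Over a field this was automatic, but over $R$ it is the statement that $E = E_R(R/\m)$ is an injective cogenerator for $R$-modules---a standard fact for local rings, proved by pulling $a$ back to a map $R/\Ann(a) \twoheadrightarrow R/\m = \operatorname{Soc} E \hookrightarrow E$ and extending by injectivity of $E$. Note that the hypothesis that $X$ is free on the punctured spectrum enters only to guarantee, via \cite[7.7.5(i)]{Buchweitz}, that the $\eta^i$ exist at all; granting that, the proof is a routine reprise of the field case. Alternatively, one may observe that the whole assertion is a special case of \cite[7.7.5(iii)]{Buchweitz} and cite it directly.
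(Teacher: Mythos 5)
Your proposal is correct, but it takes a genuinely different route from the paper: the paper offers no proof at all, the entire justification being the sentence that the statement ``is a special case of \cite[7.7.5(iii)]{Buchweitz}.'' Your argument, by contrast, makes the result self-contained. You re-run Lemma~\ref{forms} and the Calabi--Yau proposition of Section~\ref{B} with the field duality $\D(-)$ replaced by Matlis duality $\Hom_R(-,E)$, observe that the associativity computation uses only naturality of $\eta$ (true), and correctly isolate the single point where the field hypothesis genuinely matters: one direction of non-degeneracy requires the canonical map $A^i \to \Hom_R(\Hom_R(A^i,E),E)$ to be injective, i.e.\ that $E$ is an injective cogenerator, which you establish by the standard socle argument. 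That is exactly the right observation, and what your route buys is a transparent proof together with an implicit strengthening of Lemma~\ref{forms} --- it holds for any exact duality valued in an injective cogenerator, not just for a field. What the paper's citation buys is brevity, and it also sidesteps one technical point your write-up glosses over: to conclude that the form you construct from $\eta^0$ via Lemma~\ref{forms} literally coincides with the form (\ref{nagbf}), which is defined using the whole family $\eta^{d-1-i}$ evaluated at $X$, one needs the $\eta^i$ to be compatible with the suspension identifications $A^i \cong \Hom_{\T}(\s^{d-1-i}X,\s^{d-1}X)$; this compatibility is part of Buchweitz's construction but should be stated. For the downstream theorems the distinction is immaterial, since they only use that $A$ admits \emph{some} non-degenerate associative graded bilinear form into a module concentrated in degree $d-1$, which your argument fully delivers.
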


We now make statements in terms of regular sequences as in the previous section.  However, we encounter a dichotomy due to (2) of Theorem \ref{thm.depth2}, since the index of duality $n=d-1$ may be non-negative.  We treat the $d=0$ and $d>0$ cases separately, starting with the former.  In this case all $R$-modules are free on the punctured spectrum vacuously.

\begin{theorem}\label{Gorenstein}
Let $(R, \m)$ be a zero-dimensional commutative local Gorenstein ring, and denote by $A$ the Tate cohomology ring $\Tateext_{R}^*(X,X)$ of the finitely generated $R$-module $X$. If $A^{\ge 0}$ contains a regular sequence of length two of central elements of positive degree, then
\[
A\cong A^{\ge 0}\ltimes A^{<0}.
\]
\end{theorem}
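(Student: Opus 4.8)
The plan is to reduce everything to the abstract machinery of Section~\ref{A}, and in particular to the first assertion of Corollary~\ref{cor.depth2}. The decisive observation is that in the zero-dimensional case the index of duality is $n = d - 1 = -1$, so we land exactly in the $(-1)$-shifted situation handled there, where the negative part becomes a square-zero ideal split off by the inclusion of the non-negative part.

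First I would record that since $d = 0$ there are no non-maximal prime ideals, so the finitely generated module $X$ is free on the punctured spectrum vacuously and Proposition~\ref{Gor:nagbf} applies with no extra hypothesis. It furnishes a non-degenerate associative graded bilinear form $\langle-,-\rangle \colon A \otimes_R A \to E[1-d] = E[1]$, whose target $E$ is concentrated in degree $n = d-1 = -1$. I would then feed this into the characterization opening Section~\ref{A}, namely that a $\mathbb Z$-graded ring has non-degenerate products in degree $n$ precisely when it carries a non-degenerate associative graded bilinear form to an abelian group concentrated in degree $n$. The only point worth a sentence is that this characterization is stated for an arbitrary abelian-group-valued form, whereas Proposition~\ref{Gor:nagbf} produces an $R$-bilinear form into $E$; but an $R$-bilinear form is in particular additive in each variable, so, forgetting the $R$-action and viewing $E$ merely as an abelian group, the hypothesis is met verbatim. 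Hence $A$ has non-degenerate products in degree $-1$.

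With that in hand I would invoke Corollary~\ref{cor.depth2}: its first assertion requires only that $A$ be a $\mathbb Z$-graded ring with non-degenerate products in degree $-1$ together with a regular sequence $(r,\tilde r)$ of central elements of positive degree in $A^{\ge 0}$ — which is exactly the data now available — and yields $A \cong A^{\ge 0} \ltimes A^{<0}$. Unpacking what supports this, it rests on Lemma~\ref{lem.tor_depth_2} (giving $\Tor_r A = A^{<0}$) and Theorem~\ref{thm.depth2} (giving that $A^{<0}$ is an ideal with $(A^{<0})^2 = 0$), the trivial-extension splitting being provided by the subring inclusion $A^{\ge 0}\hookrightarrow A$, whose composite with $A \twoheadrightarrow A/A^{<0} = A^{\ge 0}$ is the identity.

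The only genuine obstacle is confirming that these results transfer from the ``$k$-algebra over a field'' framing to our ``$R$-algebra with $E$-valued form'' setting, since $R$ is a local ring rather than a field and $E$ is not a field. The resolution is that the statements actually used — the characterization Proposition, Theorem~\ref{thm.depth1}, Lemma~\ref{lem.tor_depth_2}, Theorem~\ref{thm.depth2}, and the \emph{first} half of Corollary~\ref{cor.depth2} — are all phrased for plain $\mathbb Z$-graded rings and invoke neither the field nor the shifted self-duality; only the graded ring structure, the non-degenerate-products property, and the regular sequence enter. The field-dependent duality statement (the second half of Corollary~\ref{cor.depth2}) is precisely what we do \emph{not} claim here, which is consistent with the $d=0$ target $E$ being merely the injective hull of the residue field rather than the field itself.
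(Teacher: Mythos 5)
Your proposal is correct and is precisely the argument the paper intends (the paper leaves this proof implicit, immediately after Proposition~\ref{Gor:nagbf}): with $d=0$ the hypotheses of Proposition~\ref{Gor:nagbf} hold automatically, the resulting form lands in degree $d-1=-1$, so $A$ has non-degenerate products in degree $-1$, and the first assertion of Corollary~\ref{cor.depth2} gives $A \cong A^{\ge 0} \ltimes A^{<0}$. Your two points of care --- that the characterization of non-degenerate products only needs an abelian-group-valued form, and that the field/selfduality hypotheses enter only in the duality statements you do not claim --- are exactly the right checks.
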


\begin{remark}
In \cite{AvramovVeliche} additional structure is specified in the special case of $M=R/\m$, but for any dimension.
\end{remark}

\begin{theorem}
Let $(R, \m)$ be a commutative local Gorenstein ring of dimension $d>0$, and denote by $A$ the Tate cohomology ring $\Tateext_{R}^*(X,X)$ of the finitely generated maximal Cohen-Macaulay $R$-module $X$ which is free on the punctured spectrum of $R$. Then  $A^{\ge 0}$ contains at most a regular sequence of length one of central homogeneous elements of positive degree.
\end{theorem}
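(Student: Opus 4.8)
The plan is to obtain the statement as a contrapositive of Theorem~\ref{thm.depth2}, the key observation being that the relevant index of duality here is $n = d-1$, which is nonnegative precisely when $d > 0$.

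First I would invoke Proposition~\ref{Gor:nagbf} to produce the non-degenerate associative graded bilinear form
\[ \langle -, - \rangle \colon A \otimes_R A \to E[1-d]. \]
Since $E$ sits in degree $0$, the target $E[1-d]$ is concentrated in degree $d-1$; equivalently, the component forms pair $A^i$ with $A^{d-1-i}$, so the pairing has degree $n = d-1$. I would then feed this into the characterization established at the start of Section~\ref{A}, namely that non-degenerate products in degree $n$ are equivalent to the existence of a non-degenerate associative graded bilinear form valued in an abelian group concentrated in degree $n$. The one point to check is that, although the present form is $R$-bilinear and lands in the $R$-module $E$ rather than in a field, the notion of non-degenerate products in degree $n$ and the cited characterization are purely ring-theoretic and use only the additive (abelian-group) structure of the target. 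Hence they apply unchanged, and we conclude that $A$ has non-degenerate products in degree $n = d-1$.

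With this established, suppose for contradiction that $A^{\ge 0}$ contained a regular sequence $(r, \tilde r)$ of length two of central homogeneous elements of positive degree. Then $A$ satisfies both hypotheses of Theorem~\ref{thm.depth2} --- non-degenerate products in degree $n$, together with a length-two central positive-degree regular sequence on $A^{\ge 0}$ --- whose conclusion forces $n < 0$. But $n = d-1 \ge 0$ because $d > 0$, a contradiction. Thus no such length-two regular sequence exists. Since any regular sequence of length at least two restricts, on its first two terms, to a length-two regular sequence of exactly the kind just excluded, it follows that $A^{\ge 0}$ admits at most a regular sequence of length one, as claimed.

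There is no genuine obstacle here: the entire content is the bookkeeping of the first paragraph, verifying that Proposition~\ref{Gor:nagbf} places us squarely inside the abstract setting of Section~\ref{A} even though the coefficients are a ring $R$ (with dualizing object the injective hull $E$) rather than a field. Once that identification is made, the proof is the immediate contrapositive of Theorem~\ref{thm.depth2}, supplemented only by the trivial reduction of longer regular sequences to their length-two initial segments.
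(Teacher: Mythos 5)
Your proof is correct and is essentially the paper's own argument: the paper likewise deduces the theorem from Proposition~\ref{Gor:nagbf} combined with Theorem~\ref{thm.depth2} (whose conclusion $n<0$ contradicts $n=d-1\ge 0$). Your additional bookkeeping---checking that the $R$-bilinear form valued in $E$ still yields non-degenerate products in degree $d-1$, since that notion and its characterization in Section~\ref{A} only use the additive structure of the target---is exactly the verification the paper leaves implicit.
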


\begin{proof}
The result follows from Proposition \ref{Gor:nagbf} and (3) of Theorem \ref{thm.depth2}.
\end{proof}

Since $\Ext_R^i(X,X)\cong\Tateext_R^i(X,X)$ for all $i>0$, and $\Tateext^0_R(X,X)$ is a quotient
of $\Ext^0_R(X,X)$ when $X$ is a maximal Cohen-Macaulay $R$-module, we get the following
corollary involving the absolute Ext algebra.

\begin{corollary}
Let $(R, \m)$ be a commutative local Gorenstein ring of Krull dimension $d>0$, and $X$ a finitely generated maximal Cohen-Macaulay $R$-module which is free on the punctured spectrum of $R$.  Then
$\Ext_R^*(X,X)$ contains at most a regular sequence of length one of central homogeneous elements of positive degree.\qed
\end{corollary}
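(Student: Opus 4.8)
The plan is to reduce to the preceding theorem by transporting any hypothetical length-two regular sequence from $\Ext_R^*(X,X)$ to the non-negative Tate cohomology ring $A^{\ge 0} = \Tateext_R^{\ge 0}(X,X)$. The engine is the comparison homomorphism $\pi \colon \Ext_R^*(X,X) \to A^{\ge 0}$ of graded $R$-algebras which, as recalled just before the statement, is an isomorphism in every positive degree and surjective in degree $0$. Consequently $\pi$ is a surjective map of graded rings whose kernel $K = \Ker \pi$ is concentrated in degree $0$. I would first record the two easy consequences of this: centrality is preserved, since $\pi$ is a surjective ring homomorphism and so the image of a central element commutes with all of $A^{\ge 0}$; and a positive-degree element $r$ of $\Ext_R^*(X,X)$ has a nonzero image of the same degree, because $\pi$ is injective in positive degrees.

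Next I would argue by contradiction. Suppose $(r, \tilde r)$ is a regular sequence of length two of central homogeneous elements of positive degree in $\Ext_R^*(X,X)$. I claim $(\pi(r), \pi(\tilde r))$ is such a sequence in $A^{\ge 0}$. For the regularity of $\pi(r)$, I would take a homogeneous $\bar a \in A^{\ge 0}$ of degree $j \ge 0$ with $\pi(r)\bar a = 0$, lift it to a homogeneous $a \in \Ext_R^j(X,X)$, and observe that $\pi(ra) = 0$ forces $ra \in K$. But $ra$ is homogeneous of degree $|r| + j > 0$ while $K$ lives in degree $0$, so $ra = 0$; regularity of $r$ on $\Ext_R^*(X,X)$ then gives $a = 0$, hence $\bar a = 0$.

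For $\tilde r$ I would pass to the quotients. The map $\pi$ descends to a surjection $\bar\pi \colon \Ext_R^*(X,X)/r\Ext_R^*(X,X) \to A^{\ge 0}/\pi(r)A^{\ge 0}$, and since $r\Ext_R^*(X,X)$ meets degree $0$ trivially, the kernel of $\bar\pi$ is again concentrated in degree $0$, being the image of $K$ under the quotient map. Repeating the degree argument verbatim finishes the job: any homogeneous element killed by $\pi(\tilde r)$ lifts, its product with $\tilde r$ sits in positive degree and hence outside $\Ker \bar\pi$, so it vanishes, and regularity of $\tilde r$ on $\Ext_R^*(X,X)/r\Ext_R^*(X,X)$ then shows $\pi(\tilde r)$ is regular on $A^{\ge 0}/\pi(r)A^{\ge 0}$. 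Thus $A^{\ge 0}$ carries a length-two regular sequence of central positive-degree elements, contradicting the preceding theorem, where the index of duality $n = d - 1 \ge 0$ rules this out via Theorem~\ref{thm.depth2}. Hence $\Ext_R^*(X,X)$ admits at most a length-one such sequence.

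The only place demanding care is the descent of regularity, and the plan's whole weight rests on the kernel of $\pi$ being concentrated in degree $0$ while the regular elements have strictly positive degree. This degree separation is exactly what prevents a failure of regularity from being manufactured in passing to $A^{\ge 0}$ or to its further quotient, and I expect it to be the crux rather than any of the surrounding bookkeeping.
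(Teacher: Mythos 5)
Your proof is correct and takes exactly the paper's route: the paper's entire justification is the remark preceding the statement, namely that $\Ext_R^i(X,X)\cong\Tateext_R^i(X,X)$ for $i>0$ and $\Tateext_R^0(X,X)$ is a quotient of $\Ext_R^0(X,X)$, so that a hypothetical length-two central regular sequence of positive degree would transfer along this comparison map to $\Tateext_R^{\ge 0}(X,X)$ and contradict the preceding theorem (which excludes such sequences via Theorem~\ref{thm.depth2} because the duality index is $n=d-1\ge 0$). You have merely written out in full the degree-separation bookkeeping (kernel concentrated in degree $0$ versus products in positive degree) that the paper leaves implicit in its \qed.
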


When $(R, \m)$ is a commutative local Gorenstein ring of positive Krull dimension, then the residue field $k=R/\m$ is never a maximal Cohen-Macaulay $R$-module.  In contrast to the above corollary, Lemma 8.3 of \cite{AvramovVeliche} shows that if $R$ is moreover a complete intersection, then the absolute Ext algebra, $\Ext_R^*(k,k)$, does have a regular sequence of central elements of length equal to the codimension of $R$. Hence when the codimension is at least two (i.e.\ when $R$ is not a hypersurface) then the product of two negative degree elements in $\Tateext_{R}^*(k,k)$ is zero. We illustrate this (and Theorem \ref{Gorenstein}) with an example, using the same ring as in the example in Section \ref{B}.

\begin{example}
Let $c$ be a positive integer and $R$ the commutative local zero-dimensional complete intersection
$$k[x_1, \dots, x_c]/(x_1^{a_1}, \dots, x_c^{a_c}),$$
where $a_1, \dots, a_c$ are integers, all at least $2$. It follows from \cite[Theorem 5.3]{BerghOppermann} that the $\Ext$-algebra of $k$ is given by
$$\Ext_R^*(k,k) = k \langle y_1, \dots, y_c, z_1, \dots, z_c \rangle /I,$$
where $I$ is the ideal in $k \langle y_1, \dots, y_c, z_1, \dots, z_c \rangle$ generated by the following relations:
$$\begin{array}{ll}
z_iz_j - z_jz_i \\
z_iy_j - y_jz_i \\
y_iy_j + y_jy_i & \text{for $i \neq j$} \\
y_i^2 - z_i & \text{if $a_i=2$} \\
y_i^2  & \text{if $a_i \neq 2$.}
\end{array}$$
The degrees of the homogeneous generators are given by $|y_i| =1$ and $|z_i| =2$. For example, when all the exponents $a_i$ are at least $3$, then
$$\Ext_R^*(k,k) = \wedge^* (y_1, \dots, y_c) \otimes_k k[z_1, \dots, z_c],$$
where $\wedge^* (y_1, \dots, y_c)$ is the exterior algebra on the generators $y_1, \dots, y_c$.

The depth of $\Ext_R^*(k,k)$ is $c$, and the sequence $z_1, \dots, z_c$ is a maximal regular sequence in $\Ext_R^*(k,k)$. When $c=1$, then $k$ is a periodic module, and so $\Tateext_{R}^*(k,k)$ contains lots of nonzero products of elements of negative degrees. When $c \ge 2$, then Theorem \ref{Gorenstein} shows that the product of two negative degree elements in $\Tateext_{R}^*(k,k)$ is always zero.
\end{example}

\subsection*{General non-degeneracy}

Finally, we include a discussion and result which explains in a transparent manner the non-degeneracy of the graded bilinear form
\[
\langle-,-\rangle: \Tateext_R^*(R/\m,R/\m)\otimes_R \Tateext_R^*(R/\m,R/\m) \to R[1].
\]
in the case where $R$ is a zero-dimensional commutative Gorenstein ring.

Let $I$ be an ideal of the commutative local ring $(R,\m)$, and assume that $R/I$ has a complete resolution of the form
\[
T: \quad\cdots \to R^{b_2} \xrightarrow{\partial_2} R^{b_1} \xrightarrow{\partial_1} R \xrightarrow{s} R \to \cdots
\]
so that $s$ factors through the natural map $R\to R/I$. Furthermore, assume that $M$ is a totally reflexive $R$-module whose complete resolution $U$ satisfies
$\Im\partial^U_i\subseteq I U_{i-1}$ for all $i$.  We define a non-degenerate pairing
\[
\Tateext^{-i-1}_R(X,R/I) \otimes_{R} \Tateext^i_R(R/I,X) \to R/I
\]
as follows.  Choose a  nonzero element $\eta\in\Tateext^i_R(R/I,X)$. Assume that $\eta$ is represented by the top vertical map in the following diagram:
\[
\xymatrix @R+20 pt {
R \ar@{->}[r]^s & R \ar@{->}[d]^{\left[\begin{smallmatrix} a_1 \\ \vdots \\ a_n \end{smallmatrix}\right]} & \\
U_{-i} \ar@{->}[r]^{\partial^U_{-i}} & U_{-i-1}  \ar@{->}^{e_j}[d] & \\
&  R \ar@{->}[r]^s & R
}
\]
Since $\eta$ is assumed to be nonzero, $a_j\notin I$ for some $j$.  Letting $e_j$ denote the unit row matrix with $1$ in the $j$th component, we see that the composition of the vertical maps is just multiplication by $a_j$.  From the assumption on the differentials of $U$, it follows that this map $e_j$ corresponds to an element $\theta\in\Tateext^{-i-1}_R(X,R/I)$.  Thus we obtain a non-degenerate pairing by setting $\langle\theta,\eta \rangle = \overline a_j\in R/I$.  A similar argument holds if we take $\eta\in\Tateext^{-i-1}_R(X,R/I)$ nonzero. Moreover, this pairing is obviously associative in the sense that, if $\eta \in\Tateext^i_R(R/I,X)$, $\phi \in\Tateext^j_R(X,X)$ and $\theta\in\Tateext^{-i-j-1}_R(X,R/I)$, then $\langle \theta \phi, \eta \rangle = \langle \theta, \phi \eta \rangle$. From Section \ref{A} we therefore obtain the following result.

\begin{theorem}\label{ideal}
Let $R$ be a commutative local ring, $I$ an ideal, and assume that $R/I$ has a complete resolution of the form
\[
\cdots \to R^{b_2} \xrightarrow{\partial_2} R^{b_1} \xrightarrow{\partial_1} R \xrightarrow{\partial_0} R \to \cdots
\]
with $\Im \partial_i \subseteq IR^{b_{i-1}}$ for all $i$. Furthermore, denote by $A$ the Tate cohomology ring $\Tateext_{R}^*(R/I,R/I)$, and assume that $A^{\ge 0}$ contains a regular sequence of length two of central homogeneous elements of positive degree. Then
\[
A\cong A^{\ge 0}\ltimes A^{<0}.
\]
\end{theorem}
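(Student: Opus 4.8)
The strategy is to reduce this theorem to the abstract machinery already developed in Section~\ref{A}, specifically Corollary~\ref{cor.depth2}. The entire content of the theorem is the conjunction of three facts about the graded ring $A = \Tateext_R^*(R/I, R/I)$: that it has non-degenerate products in a suitable degree, that the degree of duality forces the Calabi-Yau-type index to be $-1$, and that the regular sequence of length two then triggers the splitting $A \cong A^{\ge 0} \ltimes A^{<0}$. So first I would verify that the pairing constructed in the paragraph preceding the theorem—built from the explicit diagram-chasing on complete resolutions—is precisely a \emph{non-degenerate associative graded bilinear form} in the sense of Section~\ref{A}. The associativity is asserted in the display $\langle \theta \phi, \eta \rangle = \langle \theta, \phi \eta \rangle$, and the non-degeneracy follows because for any nonzero $\eta$ one can extract some coefficient $a_j \notin I$, making $\langle \theta, \eta\rangle = \overline{a_j} \neq 0$ in $R/I$.

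\textbf{Pinning down the degree of the form.}
The key bookkeeping step is to identify the degree $n$ of non-degeneracy. The pairing runs $\Tateext^{-i-1}_R(X, R/I) \otimes_R \Tateext^i_R(R/I, X) \to R/I$, so it pairs degree $-i-1$ against degree $i$; these sum to $-1$. Hence the form is valued in $(R/I)[1]$, i.e.\ concentrated in degree $n = -1$, and $A$ has \emph{non-degenerate products in degree $-1$}. This is exactly the hypothesis feeding Corollary~\ref{cor.depth2}. I would make this explicit by noting that the target $R/I$ sits in degree $-1$ under the grading conventions fixed earlier, so that the general framework of the ``Regular elements'' and ``Depth $\geq 2$'' subsections applies verbatim with $n = -1$.

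\textbf{Invoking the abstract result.}
With non-degenerate products in degree $-1$ established, and the hypothesis that $A^{\ge 0}$ admits a regular sequence $(r, \tilde r)$ of central elements of positive degree, the conclusion is immediate from Corollary~\ref{cor.depth2}: that corollary gives precisely $A \cong A^{\ge 0} \ltimes A^{<0}$ under exactly these hypotheses. The proof thus amounts to the single line that the setup satisfies the hypotheses of Corollary~\ref{cor.depth2}. One should, however, double-check the centrality convention: the theorem assumes the regular sequence consists of central elements, which is what the abstract results in Section~\ref{A} require, so no graded-commutativity argument (as was needed for the group and Tate-Hochschild corollaries) is necessary here.

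\textbf{Anticipated obstacle.}
The genuinely delicate point is not the abstract invocation but verifying that the constructed pairing is well-defined and genuinely non-degenerate—that the element $\theta \in \Tateext^{-i-1}_R(X, R/I)$ extracted from the diagram does not depend on the choices (of $j$, of lifts, of representatives of $\eta$) in a way that breaks bilinearity, and that the factorization hypothesis $\Im \partial_i \subseteq I R^{b_{i-1}}$ is exactly what guarantees $s$ factors through $R \to R/I$ so that the coefficient $\overline{a_j}$ lands well-defined in $R/I$. This is where the hypothesis on the complete resolution of $R/I$ is really used, and it is the step I would scrutinize most carefully before handing off to Corollary~\ref{cor.depth2}; everything after that is formal.
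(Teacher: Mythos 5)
Your proposal is correct and takes essentially the same route as the paper: the paper's proof of Theorem~\ref{ideal} is precisely the handoff you describe, namely specializing the explicitly constructed pairing to $X = R/I$ (which the hypothesis $\Im \partial_i \subseteq IR^{b_{i-1}}$ permits), noting it is a non-degenerate associative graded bilinear form concentrated in degree $-1$, and then invoking the depth-two results of Section~\ref{A} (Theorem~\ref{thm.depth2} and Corollary~\ref{cor.depth2}). Your degree bookkeeping ($-i-1$ plus $i$ gives $n=-1$) and your identification of the factorization hypothesis as the point where the complete-resolution assumption does real work both match the paper's argument.
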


\end{document}